\theoremstyle{definition}
\newtheorem{theointro}{Theorem}
\newtheorem{propintro}[theointro]{Proposition}   
\newtheorem{theo}{Theorem}[section]
\newtheorem{prop}[theo]{Proposition}
\newtheorem{lemm}[theo]{Lemma}
 \titleformat{\subsection}[runin]
     {\scshape}{\thesubsection. }{0em}{}
     [. \setcounter{equation}{0}]
\def\Com{\mathrm{Com}}
\def\Lie{\mathrm{Lie}}
\def\alg{\mathrm{alg}}
\def\d{\mathsf{d}}
\def\D{\mathsf{D}}
\def\Sym{\mathsf{Sym}}
\def\DBun{\mathsf{DBun}}
\def\id{\mathrm{id}}
\def\F{\mathbb F}
\def\X{\mathbb X}
\def\P{\mathcal P}
\def\T{\mathsf T}
\def\S{\mathsf S}
\def\im{\mathsf{im}}
\def\Mod{\mathsf{Mod}}
\def\∑{\Sigma}
\author{}
\title{Tangent structures for divided power algebras}
\author{Sacha Ikonicoff}
\begin{document}
\allowdisplaybreaks

\maketitle
\begin{abstract}
	We build a tangent structure on the category of divided power algebras using a particular notion of semidirect product. We show that this tangent structure admits an adjoint tangent structure, which involves a version of Kähler differentials, and which is similar to the Zariski cotangent space for affine schemes. We study vector fields and differential bundles for these two structures, which correspond respectively to a notion of special derivation, and to the category of modules over the underlying commutative algebra of a given divided power algebra.
\end{abstract}
\tableofcontents
\subsection*{Introduction} Divided power algebras are a class of commutative algebras equipped with additional monomial operations. Briefly, a divided power algebra $A$ is an algebra equipped, for all $n$, with an operation $a\mapsto a^{[n]}$ which behaves like the operation $a\mapsto \frac{a^n}{n!}$. Indeed, in characteristic 0, those operations coincide, so divided power algebras are just usual commutative algebras. However, in positive characteristics, the two notions diverge. These algebras were introduced by Cartan in \cite{cartan1954puissances}, and in this same series of articles, it is shown that the cohomology of Eilenberg--MacLane spaces is equipped with the structure of a divided power algebra. More generally, Cartan shows that the homotopy of a simplicial algebra has divided powers. 

Since then, divided power algebras have appeared as a key notion for different algebraic theories in positive characteristic, notably as a structure on the crystalline cohomology of schemes \cite{berthelot74}, and in deformation theory \cite{fox93}.

The aim of this article is to build certain tangent structures on the category of divided power algebra and its opposite. The notion of a tangent category, due to Rosick\'y \cite{RosickyTangentCats}, and rediscovered by Cockett and Cruttwell \cite{cockett2014differential}, is a categorical formalisation which axiomatises the structure of the tangent bundle functor from differential geometry. As such, a tangent structure on a category governs a certain geometry on the objects of this category. While smooth manifolds and smooth maps between them is the archetypal example of tangent category, examples of a more algebraic flavour have also been studied. In particular, the category of rings admits an interesting tangent structure, whose tangent bundle involves the ring of dual number \cite[Example 2]{RosickyTangentCats}. This tangent structure is studied in more details in \cite[Section 3]{cruttwellLemay:AlgebraicGeometry}, and appears in \cite[Example 4.18]{ikonicoffLL24}, where it is built using a certain notion of semidirect product. Another, perhaps even more interesting tangent structure, appears on the opposite category of commutative rings. This structure, adjoint to the structure involving dual numbers, uses the notion of Kähler differentials, and corresponds to the Zariski cotangent space, simply called \textit{fibré tangent} by Grothendiek \cite[Définition 16.5.12.I]{grothendieck1966elements}. This structure is mentioned in \cite{cockett2014differential,GARNER2018668}, and studied in more details in \cite{cruttwellLemay:AlgebraicGeometry} and \cite[Example 4.26]{ikonicoffLL24}.

In this article, we will use techniques similar to those of \cite{ikonicoffLL24}: indeed, we will build a tangent structure on divided power algebras from a certain differential combinator on the monad $\Gamma$ whose algebras are divided power algebras. This differential combinator was introduced in \cite[Section 6]{ikonicofflemay23}, and uses a natural notion of derivation for divided power monomials (see \cite{KP}). The induced tangent structure involves a certain semidirect product for divided power algebras. We call this structure the algebraic tangent structure for divided power algebras.

We then show that the tangent bundle functor admits a left adjoint, obtained from a certain notion of Kähler differentials for divided power algebras. This implies that the opposite category of divided power algebras admits a tangent structure whose tangent bundle is given by this left adjoint functor. By analogy with the ring of coordinates for an affine scheme, we call the objects of the opposite category of divided power algebras the divided affine scheme. We refer to the adjoint tangent structure involving Kähler differential as the geometric tangent structure for divided affine scheme.

To conclude this article, we study the notions of vector fields and differential bundles in these two tangent categories. The notions of vector fields and differential bundles in a tangent category were respectively introduced in \cite{cockett2015jacobi} and \cite{cockett2021differential}. In \cite[Lemma 2.9 and Example 4.3.3]{ikonicoffLL24}, we show that vector fields in both the tangent structure on commutative rings using dual numbers, and in the adjoint tangent structure, correspond to the usual notion of derivation for a ring. Here, we will show that vector fields in our algebraic and geometric tangent structures for divided power algebras and divided schemes correspond to a certain notion of special derivation, which are derivations $d:A\to A$ of a divided power algebra $A$ satisfying the additional condition $d(a^{[p]})=-a^{p-1}d(a)$, where $p$ is the characteristic of the base field. In \cite{cruttwellLemay:AlgebraicGeometry} Cruttwell and Lemay show that differential bundles over a commutative ring $R$ correspond to $R$-modules in both tangent structures. Here, we will show that tangent bundles over a divided power algebra $A$ will correspond, in both of our tangent structures, to modules over the underlying commutative algebra of $A$, forgetting the divided power structure.

To summarise, here are the main results of this article:
\begin{theointro}[\ref{theo:tanstructdivpow}]
    There is a tangent structure on the category of divided power algebras for which the tangent bundle functor $\T$ sends the divided power algebra $A$ to the semidirect product $A\ltimes_pA$.
\end{theointro}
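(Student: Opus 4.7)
The plan is to deploy the general machinery of \cite{ikonicoffLL24}, which produces a tangent structure on the category of algebras over a monad $T$ from the data of a \emph{differential combinator} on $T$. For the monad $\Gamma$ whose algebras are divided power algebras, such a differential combinator has already been exhibited in \cite[Section 6]{ikonicofflemay23}, using the natural derivation law $\d(a^{[n]}) = a^{[n-1]} \d(a)$ from \cite{KP}; applying the general construction to this data is then the engine of the proof.

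First I would recall the monad $\Gamma$ on the base category of modules whose category of algebras is the category of divided power algebras, and import from \cite[Section 6]{ikonicofflemay23} the differential combinator on $\Gamma$, verifying that all the axioms required by \cite{ikonicoffLL24} are satisfied (or citing this verification directly). Second, I would invoke the transfer theorem of \cite{ikonicoffLL24} which, given any such differentially combinatorial monad, produces a tangent structure on the category of algebras with tangent bundle computed by a semidirect product: concretely, $\T(A) = A \ltimes_{\Gamma} A$, where the underlying module is $A \oplus A$ and the algebra structure is transported from the combinator. Third, I would unpack this semidirect product to identify it with the object $A \ltimes_p A$ announced in the statement; this amounts to spelling out the multiplication and the divided power operations on the second factor, and checking that they reproduce Cartan's formulas in the "$\varepsilon^2 = 0$" fashion appropriate to divided powers of square-zero extensions.

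Finally, the structural natural transformations of the tangent structure --- the projection $p\colon \T \Rightarrow \id$, zero section $0\colon \id \Rightarrow \T$, addition $+\colon \T_2 \Rightarrow \T$, vertical lift $\ell\colon \T \Rightarrow \T^2$, canonical flip $c\colon \T^2 \Rightarrow \T^2$, and negation $n\colon \T\Rightarrow \T$ --- are all read off from the universal formulas in \cite{ikonicoffLL24}; the tangent category axioms (associativity, universality of the vertical lift, the pullback conditions, and the coherence with $c$) then follow directly from the differential combinator axioms established in the first step.

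The main obstacle will be the careful identification in the second step: writing down explicitly what the semidirect product produced by the abstract construction looks like as a divided power algebra, and verifying that its divided powers agree with the expected "infinitesimal" formula on the fiber. Once this explicit description is in place, the verification of the tangent structure axioms is purely formal, inherited from \cite{ikonicoffLL24}, and requires no further calculation beyond the combinator axioms already recorded in \cite{ikonicofflemay23}.
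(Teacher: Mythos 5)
Your proposal follows essentially the same route as the paper: cite the differential combinator on $\Gamma$ from \cite[Section 6]{ikonicofflemay23}, apply the induced tangent structure theorem of \cite{ikonicoffLL24}, and then do the one nontrivial computation, namely unpacking the structural map $\left\langle \alpha \circ \Gamma(\pi_1), \alpha \circ \Gamma(\pi_1 + \pi_4) \circ \partial_{A \times A} \right\rangle$ on products and on divided $p$-th powers to identify $\T(A)$ with $A\ltimes_p A$. You correctly isolate that explicit identification as the only real work, which is exactly where the paper spends its effort.
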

\begin{theointro}[\ref{theo:tanstructdivschemes}]
    There is a tangent structure on the opposite category of divided power algebras for which the tangent bundle functor $\T$ sends the divided power algebra $A$ to $\Gamma^+_A(\Omega^1_A)$, the free $A$-divided power algebra generated by the strongly abelian Kähler differentials of $A$.
\end{theointro}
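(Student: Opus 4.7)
The plan is to exhibit the functor $A\mapsto\Gamma^+_A(\Omega^1_A)$ as a left adjoint to the tangent bundle $\T\colon A\mapsto A\ltimes_p A$ of Theorem~A, and then invoke the general principle that a left adjoint to a tangent bundle functor induces a tangent structure on the opposite category (as used in \cite{cockett2014differential,ikonicoffLL24} for the case of commutative rings). Since Theorem~A already supplies the full tangent structure on $\mathsf{DPA}$, the axioms on $\mathsf{DPA}^{\mathrm{op}}$ will follow formally once the adjunction is in place; the real work is in constructing this adjunction.

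The first step is to unpack the set of morphisms $A\to B\ltimes_p B$ in $\mathsf{DPA}$: such a morphism decomposes as a pair $(f,d)$ where $f\colon A\to B$ is a morphism of divided power algebras and $d\colon A\to B$ is a \emph{special derivation} along $f$, i.e.\ an additive $f$-derivation satisfying the constraint $d(a^{[p]})=-a^{p-1}d(a)$ highlighted in the introduction. The twisted semidirect product $\ltimes_p$ is engineered so that the divided power operations on its second summand force precisely this identity.

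The second step is to identify $\Omega^1_A$ as the universal $A$-module receiving a special derivation $d_A\colon A\to\Omega^1_A$, so that for every $A$-module $M$, the special derivations $A\to M$ are in natural bijection with $A$-linear maps $\Omega^1_A\to M$. Combining this universal property with the free/forgetful adjunction between $A$-modules and augmented $A$-divided power algebras, namely $\mathrm{Hom}(\Gamma^+_A(M),B)\cong\mathrm{Hom}(M,\overline{B})$ with $\overline{B}$ the augmentation ideal of $B$, then yields the desired natural isomorphism $\mathrm{Hom}(\Gamma^+_A(\Omega^1_A),B)\cong\mathrm{Hom}(A,B\ltimes_p B)$.

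The main obstacle I anticipate is pinning down the precise universal property of the strongly abelian Kähler differentials $\Omega^1_A$ in the divided power setting: unlike in the purely commutative case, the divided power operations impose additional relations governing $d(a^{[n]})$ for every $n\geq 1$, and one must calibrate the ``strongly abelian'' module-theoretic conditions against the special-derivation condition so that the two universal properties dovetail cleanly. Once this identification is in place, the naturality required to lift the structure maps of $\T$ (projection, zero section, sum, vertical lift, canonical flip) to the adjoint setting, and thereby assemble a full tangent structure on $\mathsf{DPA}^{\mathrm{op}}$, follows formally from the axioms verified in Theorem~A.
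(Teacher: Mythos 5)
Your proposal is correct and follows essentially the same route as the paper: the paper likewise establishes the adjunction $\Gamma^+_A(\Omega^1_A)\dashv(-\ltimes_p-)$ (by writing the explicit mutually inverse bijections $f\mapsto f^\flat$, $g\mapsto g^\sharp$, which amount exactly to your composite of the universal property of $\Omega^1_A$ for special derivations with the free/forgetful adjunction $\Gamma^+_A\dashv U$) and then invokes the general dualization principle from \cite{cockett2014differential,ikonicoffLL24} to transport the tangent structure to the opposite category. The ``calibration'' you flag as the main obstacle is resolved in the paper by defining $\Omega^1_A$ as the quotient of $\Omega_{U(A)}$ by the relations $\d(a^{[n]})-a^{[n-1]}\d a$, which (since $a^{*p-1}=(p-1)!\,a^{[p-1]}=-a^{[p-1]}$) makes the universal derivation precisely a special derivation into a module with trivial $p$-map.
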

\begin{propintro}{\ref{prop:vectorfieldalg} and \ref{sec:vectorfieldsch}}
    There is a one-to-one correspondence between vector fields in the algebraic tangent structure (and in the geometric tangent structure) over $A$ and special inner derivations of $A$.
\end{propintro}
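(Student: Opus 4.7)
The plan is to unfold the definition of a vector field in each tangent category and verify that, for a section of the projection, being a morphism of divided power algebras amounts precisely to the axioms of a special inner derivation on $A$.

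For the algebraic tangent structure, a vector field over $A$ is a morphism $v\colon A\to A\ltimes_p A$ of divided power algebras with $p_A\circ v=\id_A$, together with the compatibility with the zero section and with the addition on $\T A$. Writing $v(a)=(a,\delta(a))$ defines a set map $\delta\colon A\to A$, and I would then translate the axioms one by one. Preservation of sums yields $\delta$ additive; preservation of the unit yields $\delta(1)=0$; preservation of products, using the rule $(a,b)(a',b')=(aa',ab'+a'b)$ in the semidirect product, yields the Leibniz rule; and preservation of divided powers, using the explicit formula for $(a,b)^{[n]}$ in $A\ltimes_p A$ (whose second component contributes $a^{[n-1]}b$), yields the identity $\delta(a^{[n]})=a^{[n-1]}\delta(a)$. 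The special condition $\delta(a^{[p]})=-a^{p-1}\delta(a)$ is then the specialisation at $n=p$, using Wilson's theorem together with $(p-1)!\,a^{[p-1]}=a^{p-1}$ to rewrite $a^{[p-1]}=-a^{p-1}$ in characteristic~$p$. The compatibility with the zero section and tangent addition is automatic, since $\T A\to A$ is the abelian group object induced by componentwise addition on the second factor.

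For the geometric tangent structure, a vector field lives in the opposite category, so it corresponds to a morphism $\Gamma^+_A(\Omega^1_A)\to A$ of divided power $A$-algebras retracting the canonical map $A\to\Gamma^+_A(\Omega^1_A)$. By the universal property of the free $A$-divided power algebra on a strongly abelian module $M$, such a morphism is determined by an $A$-linear map $M\to A$; specialising $M=\Omega^1_A$ and invoking the universal property of the module of divided power Kähler differentials yields exactly the special derivations. Alternatively, one can bypass this direct calculation by invoking the adjunction between the algebraic and geometric tangent bundles: a section of the projection on one side corresponds bijectively to a retraction of the unit on the other, transporting the slicing data, and hence carrying vector fields to vector fields, so that the algebraic description transports to the geometric side automatically.

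The main obstacle is to extract the precise formula for the divided power operation on the second component of $A\ltimes_p A$, showing that sections of the projection reproduce exactly the special derivation condition (rather than some variant or the classical divided derivation condition). A secondary point is to match the universal property of $\Omega^1_A$ for divided power algebras with the characterisation of derivations as special ones in the sense of the statement, so that both propositions describe the same underlying notion; this is where the adjunction approach becomes particularly convenient, reducing the geometric half to a formal consequence of the algebraic half.
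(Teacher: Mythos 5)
Your proposal is correct and follows essentially the same route as the paper: for the algebraic structure one unfolds a section of $p_A$ into a map $a\mapsto(a,\delta(a))$ and checks that the multiplication and the $p$-th divided power of $A\ltimes_pA$ (whose second component is $-a^{p-1}\delta(a)$, matching your Wilson-theorem rewriting of $a^{[p-1]}$) force $\delta$ to be exactly a special inner derivation, which is the observation the paper delegates to its section on special derivations and to Dokas; for the geometric structure the paper uses precisely your ``alternative'' argument, invoking the adjoint tangent structure correspondence of vector fields, though your direct argument via the universal property of $\Gamma^+_A(\Omega^1_A)$ works equally well. The only cosmetic slip is the mention of unit preservation, which is vacuous here since the algebras are non-unital.
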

\begin{theointro}{\ref{theo:equivbundlemodulealg}}
    There is an equivalence of categories between the category of differential bundles over $A$ in the algebraic tangent structure and the category of modules over the underlying commutative algebra of $A$.
\end{theointro}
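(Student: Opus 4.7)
The plan is to construct the equivalence by exhibiting functors in both directions and showing they are mutually quasi-inverse, paralleling the strategy of Cruttwell and Lemay for ordinary commutative rings.

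In one direction, I would define a functor $\Phi \colon \Mod(A) \to \DBun(A)$ (where the left-hand side denotes modules over the underlying commutative algebra of $A$) sending a module $M$ to the semidirect product $A \ltimes_p M$, equipped with differential bundle data (projection, zero section, fiberwise addition, and vertical lift) obtained by the same formulas as those used to construct $\T A = A \ltimes_p A$. The key observation is that the divided power operations on $A \ltimes_p A$ involve the second component only through the $A$-module action, so the construction extends verbatim to any $A$-module $M$, without requiring divided powers on $M$ itself.

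In the other direction, I would define $\Psi \colon \DBun(A) \to \Mod(A)$ by assigning to a differential bundle $q \colon E \to A$ the fiber of $q$ over the zero section of $\T A \to A$; the abelian group structure is induced by the fiberwise addition, and the $A$-action is built from the vertical lift together with the multiplication on $A$, exactly as in the commutative ring case. Functoriality in both directions and the compatibility of $\Psi\circ\Phi$ with the identity on $\Mod(A)$ are immediate.

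The main obstacle will be proving $\Phi\circ\Psi \cong \id_{\DBun(A)}$. For a given differential bundle $E$ over $A$, I would construct a comparison map $A \ltimes_p \Psi(E) \to E$ using the zero section together with the vertical lift, and then show it is an isomorphism of divided power algebras compatible with the differential bundle structure. The delicate step is to show that the divided power operations on $E$ are entirely determined by the bundle structure: specifically, that $m^{[n]} = 0$ for any vertical element $m$ and $n \geq 2$, and that the divided powers of mixed elements reduce to the formula already used to define $\Phi$. I expect this to follow from the differential combinator description of $\Gamma$ used throughout the paper, combined with the universal property of the vertical lift in a differential bundle — the latter constrains the $\Gamma$-algebra structure on $E$ to be determined entirely by the tangent-theoretic data.
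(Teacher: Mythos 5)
Your proposal follows essentially the same route as the paper: reduce to the Cruttwell--Lemay equivalence for the underlying commutative algebra $U(A)$, send a $U(A)$-module $M$ to $A\ltimes_p M$ with trivial $p$-map, and recognize that the only genuinely new step is showing that for an arbitrary differential bundle $q:E\to A$ the divided $p$-th power vanishes on the vertical part (equivalently, the $p$-map on $\ker(q)$ is trivial), which the paper deduces exactly as you anticipate --- from the requirement that the lift $\lambda$ be a morphism of divided power algebras, via the computation $\lambda((a,m)^{[p]})=(a^{[p]},0,0,\pi(m)-a^{p-1}m)=(a,0,0,m)^{[p]}=(a^{[p]},0,0,-a^{p-1}m)$. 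The only minor imprecision is your claim that $m^{[n]}=0$ must be checked for all $n\geq 2$: since in characteristic $p$ the divided power structure is generated by the $p$-th operation, only $n=p$ needs to be verified.
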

\begin{theointro}{\ref{theo:equivbundlemodulesch}}
    There is an equivalence of categories between the category of differential bundles over $A$ in the geometric tangent structure and the opposite category of modules over the underlying commutative algebra of $A$.
\end{theointro}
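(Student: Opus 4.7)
The plan is to follow the strategy of the proof of Theorem~\ref{theo:equivbundlemodulealg}, translated to the opposite category via the adjunction between the algebraic and geometric tangent bundles established earlier. The main construction sends an $A$-module $M$ (where $A$ is regarded only with its underlying commutative algebra structure) to the free $A$-divided power algebra $\Gamma^+_A(M)$ on $M$ viewed as a strongly abelian module. The augmentation map $\Gamma^+_A(M)\to A$ gives, in the opposite category of divided power algebras, a bundle projection from $A$; the universal property of $\Gamma^+_A$ then provides the additional structure (zero section, addition, vertical lift) required of a differential bundle in the geometric tangent structure.

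First I would define a functor $\Phi\colon\Mod_A^{\mathrm{op}}\to\DBun_A$, where $\DBun_A$ denotes the category of differential bundles over $A$ in the geometric tangent structure. The contravariance is precisely what produces the opposite category in the statement: a module morphism $M\to N$ yields a divided power algebra morphism $\Gamma^+_A(M)\to\Gamma^+_A(N)$, which reverses direction when passed to the opposite category. I would verify the differential bundle axioms one by one using the universal property of $\Gamma^+_A$ applied to the $A$-module structure maps of $M$, together with the explicit description $\T A=\Gamma^+_A(\Omega^1_A)$ from Theorem~\ref{theo:tanstructdivschemes}.

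Next I would construct a quasi-inverse $\Psi\colon\DBun_A\to\Mod_A^{\mathrm{op}}$. Given a differential bundle $p\colon A\to E$ in the opposite category, the $A$-module is recovered from the sum and vertical lift on $E$: the vertical lift classifies a morphism from $E$ into the tangent bundle $\Gamma^+_A(\Omega^1_A)$, and combined with the additive structure this singles out a natural sub-module of strongly abelian vertical generators in $E$ which carries the required $A$-module structure.

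The main obstacle will be showing that $\Phi\circ\Psi\simeq\id$, i.e.\ that every differential bundle in the geometric tangent structure is freely generated, as an $A$-divided power algebra, by its module of strongly abelian vertical generators. The cleanest route is to exploit the adjunction: an adjoint pair of tangent structures transports differential bundles to differential bundles up to appropriate dualisation, so Theorem~\ref{theo:equivbundlemodulealg} transfers along the adjunction built in Theorem~\ref{theo:tanstructdivschemes} to yield the desired equivalence. If no sufficiently general formal transfer result is available off the shelf, I would fall back on a direct computation mirroring the algebraic case, the key input being that the vertical lift axiom forces $E$ to be precisely $\Gamma^+_A$ of its module of vertical generators.
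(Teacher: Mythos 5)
Your overall architecture matches the paper's: the functor $M\mapsto\Gamma^+_A(M)$ in one direction, and in the other direction the extraction of a $U(A)$-module of ``vertical generators'' from the lift, which the paper realises concretely as $\im(\D_\lambda)$ where $\D_\lambda=\lambda\circ\d:E\to E$ and $\d$ is induced by the universal derivation. Two remarks before the main point. First, be careful with variance: in $\Gamma_{\alg}^{op}$ the bundle projection over $A$ is the opposite of the unit inclusion $q:A\to\Gamma^+_A(M)$, while the augmentation $\Gamma^+_A(M)\to A$ killing $M$ is the co-zero map (the zero section), not the projection; your description conflates the two. Second, your $\Psi$ needs to be made precise: the paper does this via the lemma showing $\im(\D_\lambda)$ is a $U(A)$-submodule of $E$, using the differential bundle axiom $\lambda\circ\T^\circ(q)=q\circ z$.

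The genuine gap is in your preferred route for proving $\Phi\circ\Psi\simeq\id$, namely ``transporting'' differential bundles along the adjunction $\T^\circ\dashv\T$. No such transfer result is available, and one should not expect one: the correspondence that does transfer along an adjoint tangent structure is the one for vector fields (\cite[Lemma 2.9]{ikonicoffLL24}, used in Section \ref{sec:vectorfieldsch}), and it works because a vector field is a single section of $p_A$, which dualises cleanly. Differential bundles do not dualise this way. Concretely, the differential bundle over $A$ in the algebraic structure associated to $M$ is the square-zero extension $A\ltimes_p M$, while in the geometric structure it is the free $A$-divided power algebra $\Gamma^+_A(M)$; these are not images of one another under $\T$ or $\T^\circ$, so Theorem \ref{theo:equivbundlemodulealg} cannot be formally pushed across the adjunction. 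Your fallback -- a direct computation showing that the lift axioms force $E\cong\Gamma^+_A(\im(\D_\lambda))$ -- is the correct and necessary route, and it is what the paper does, by adapting the arguments of Cruttwell and Lemay (\cite[Lemmas 4.9, 4.11, 4.13, 4.14 and Theorem 4.17]{cruttwellLemay:AlgebraicGeometry}) from symmetric algebras over a commutative ring to $\Gamma^+_A$ over a divided power algebra, the key substitute being that $\Gamma^+_A(M)$ is the free divided power $A$-algebra on $M$ and that pushouts of such algebras decompose as in Section \ref{sec:divpowerA-alg}. As written, your proposal treats this as an optional contingency; it is in fact the entire content of the proof.
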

\subsection*{Outline} Section \ref{sec:recdivpowealg} and \ref{sec:recdifftan} are background sections dedicated respectively to divided power algebras, and to differential and tangent structures. In \ref{sec:difftandivpowalg}, we combine results from the two first section to define our tangent structure on the category of divided power algebras and its opposite. In Sections \ref{sec:algtanstruc} and \ref{sec:geotanstruc}, we identify vector fields and differential bundles respectively for the algebraic tangent structure of divided power algebras, and for the geometric tangent structure on divided affine schemes. Finally, in Section \ref{sec:futurework}, we present some interesting topics open for future investigation.
\subsection*{Acknowledgment} I would like to thank JS Lemay for reminding me that I had written this.

\subsection*{Notations} We fix a base field $\F$. Unless otherwise specified, all vector spaces and algebras are considered over $\F$. We denote by $\∑_n$ the symmetric group on $n$ letters.
\section{Recollection on divided power algebras}\label{sec:recdivpowealg}
In this section, we recall the classical definition of a divided power algebra, due to Cartan \cite{cartan1954puissances}. We introduce our notation, and study certain notions of modules and derivations, as well as the construction of coproducts and pushouts in the category of divided power algebras and in the slice category of algebras under a given divided power algebra. Our main references for this section are \cite{roby63,roby65alg,soublin87puissances} for the classical notions and \cite{dokastriplecohomology,dokas23} for more modern notions of modules and derivations. Note that we will work with non-unital divided power algebras, and the divided power operations are defined on the entire algebra, whereas in the classical references, one works with a unital algebra, and the divided power structure is defined solely on an ideal - sometimes an augmentation ideal. Our notion is equivalent to the augmented cases via the usual equivalence between non-unital algebras and augmented algebras.
\subsection{Divided power algebras}\label{sec:defipuisdiv} 
    A \textbf{divided power algebra} \cite[Section 4]{cartan1954puissances} is a commutative associative (non-unital) algebra $(A,*)$, equipped with a divided power structure, that is, a family of functions $(-)^{[n]}: A \to A$, $a \mapsto a^{[n]}$, indexed by strictly positive integers $n$, such that the following identities hold:
    \begin{enumerate}[label={\bf [dp.\arabic*]}]
\item\label{relComlambda} $(\lambda a)^{[n]}=\lambda^na^{[n]}$ for all $a\in A$ and $\lambda\in\mathbb{F}$.
\item\label{relComrepet} $a^{[m]}*a^{[n]}=\binom{m+n}{m}a^{[m+n]}$ for all $a\in A$.
\item\label{relComsomme} $(a+b)^{[n]}=a^{[n]}+\big(\sum_{l=1}^{n-1}a^{[l]}*b^{[n-l]}\big)+b^{[n]}$ for all $a\in A$, $b\in A$.
\item\label{relComunit} $a^{[1]}=a$ for all $a\in A$.
\item\label{relComcomp1} $(a*b)^{[n]}=n!a^{[n]}*b^{[n]}=a^{*n}*b^{[n]}=a^{[n]}*b^{*n}$ for all $a\in A$, $b\in A$.
\item\label{relComcomp2} $(a^{[n]})^{[m]}=\frac{(mn)!}{m!(n!)^m}a^{[mn]}$ for all $a\in A$.
\end{enumerate}
The function $(-)^{[n]}$ is called the $n$-th \textbf{divided power operation}. 

We denote by $\Gamma_{\alg}$ the category of divided power algebras. Divided power algebras are algebras over a monad $\Gamma$ described in the next section. We denote by $U(A)$ the underlying commutative algebra of a divided power algebra $A$, forgetting the divided power operations.

Note that, in characteristic zero, one can show that $a^{[n]}=\frac{a^n}{n!}$, so divided power algebras over a field of characteristic zero are the same thing as usual commutative and associative algebras. When the characteristic of the base field $\F$ is $p>0$, Soublin shows \cite{soublin87puissances} that the divided power operations are generated by the $p$-th divided power $a\mapsto a^{[p]}$ satisfying the relations:
    \begin{enumerate}[label={\bf [dpp.\arabic*]}]
\item\label{relComp1} $a^{*p}=0$,
\item\label{relComp2} $(a+b)^{[p]}=a^{[p]}+b^{[p]}+\sum_{i=1}^{p-1}\frac{(-1)^i}{i}a^{*i}*b^{* p-i}$,
\item\label{relComp3} $(a*b)^{[p]}=0$,
\end{enumerate}
for all $a,b\in A$.

We will often assume that the characteristic of the base field is a fixed prime $p$ and use the result above without mentioning it.
\subsection{Free divided power algebras}\label{sec:freedividedpoweralgebras} For all vector spaces $V$, denote by $\Gamma(V)$ the space of symmetric tensors of $V$, that is, $\Gamma(V)=\bigoplus_{n>0}\left(V^{\otimes n}\right)^{\∑_n}$. The vector space $\Gamma(V)$ is endowed with a divided power algebra structure, and is the free divided power algebra over $V$ \cite[Section 2]{cartan1954puissances}. Explicitly, the divided power operations and the product are defined on generators $v,w\in V$ by:
\[
v^{[n]}=v^{\otimes n}\qquad v*w=v\otimes w+w\otimes v.
\]
An arbitrary element of $\Gamma(V)$ can then be expressed as a finite sum of \textbf{divided power monomials} \cite[Section 4]{cartan7relations}, which are elements of the form: 
\[ v_1^{[r_1]}*\hdots* v_s^{[r_s]}= \sum_{\sigma\in \∑(n)/\∑(r_1,\hdots,r_s)}\sigma (v_1^{\otimes r_1}\otimes\hdots\otimes v_s^{\otimes r_s}),\]
where $\∑(r_1,\hdots,r_s) = \∑(r_1)\times\hdots\times \∑(r_s)$ is the Young subgroup of the symmetric group $\∑(n)$, with $n=r_1+\hdots+r_s$.

The assignment $V\mapsto \Gamma(V)$ extends to a monad in vector spaces. The unit $\eta_V:V\to \Gamma(V)$ identifies $V$ with ($V^{\otimes 1})^{\∑_1}\subseteq\Gamma(V)$, so $\eta_V(v)=v^{[1]}$. The multiplication ${\mu_V:\Gamma(\Gamma(V))\to\Gamma(V)}$ comes from the shuffle product, and corresponds to the composition of divided power monomials. Using \textbf{[dp.5]} and \textbf{[dp.6]}, we can express this multiplication on divided power monomials of divided power monomials: 
\begin{multline*}
   \mu_V\left((v_{1,1}^{[q_{1,1}]}*\hdots*v_{1,k_1}^{[q_{1,k_1}]})^{[r_1]}*\hdots*(v_{p,1}^{[q_{p,1}]}*\hdots*v_{p,k_p}^{[q_{p,k_p}]})^{[r_p]}\right)\\
   =~\left(\prod_{i=1}^p\frac{1}{r_i!}\prod_{j=1}^{k_i}\frac{(r_iq_{i,j})!}{q_{i,j}!^{r_i}}\right)v_{1,1}^{[r_1q_{1,1}]}*\hdots*v_{1,k_1}^{[r_1q_{1,k_1}]}*\hdots*v_{p,k_p}^{[r_pq_{p,k_p}]} 
\end{multline*}
which we then extend by linearity.
\subsection{Divided power modules}\label{section:dividedpowermodules}
For a divided power algebra $A$, a \textbf{divided power $A$-module}, which we will simply refer to as $A$-modules when $A$ is understood as a divided power algebra, is a $U(A)$-module $M$ equipped with a function $\pi:M\to M$ called the \textbf{$p$-map} satisfying, for all $a\in A$, $m,m'\in M$ and $\lambda\in\F$:
\begin{enumerate}[label={\bf [dpm.\arabic*]}]
\item\label{rel:dpm1} $\pi(m+m')=\pi(m)+\pi(m')$,
\item\label{rel:dpm2} $\pi(\lambda m)=\lambda^p\pi(m)$,
\item\label{rel:dpm3} $\pi(a\cdot m)=0$.
\end{enumerate}
Conditions \ref{rel:dpm1} and \ref{rel:dpm2} are often called $p$-semilinearity for the map $\pi$. Following \cite[Section 3]{dokastriplecohomology}, adapting to the non-unital case, divided power $A$-modules as described above correspond to Beck modules over $A$. 

\subsection{Semidirect product}
Recall that, from a commutative algebra $A$ and an $A$-module $M$, one may build a commutative algebra $A\ltimes M$ called the \textbf{semidirect product} of $M$ by $A$, with underlying vector space $A\oplus M$, and multiplication given by:
\[
(a,m)*(b,n)=(a*b,a\cdot n+b\cdot m).
\]
Following \cite[Lemma 3.1]{dokastriplecohomology}, if $M$ be a divided power $A$-module, then the semidirect algebra $A\ltimes M$ of $U(A)$ with the $U(A)$-module $M$ obtained by forgetting the $p$-map is equipped with a divided power algebra structure such that:
    \[
    (a,m)^{[p]}=(a^{[p]},\pi(m)-a^{p-1}\cdot m)
    \]
The resulting divided power algebra is denoted by $A\ltimes_p M$. 

For $A$ a divided power algebra, and $M$ a $U(A)$-module, one can consider $M$ as a divided power $A$-module such that $\pi:M\to M$ is trivial. In this case, the divided power algebra $A\ltimes_p M$ satisfies:
    \[
(a,m)^{[n]}=(a^{[p]},-a^{* p-1}\cdot m).
\]

\subsection{Special derivations}\label{section:specialderivations} A \textbf{special derivation} from a divided power algebra $A$ to an $A$-module $M$ is a linear map $D:A\to M$ such that $D(a*b)=a\cdot D(b)+b\cdot D(A)$ and $D(a^{[p]})=\pi(D(a))-a^{*p-1}\cdot D(a)$. When $M$ is equal to $A$ seen as a $U(A)$-module, these correspond to the special derivations of \cite{dokas23}, and will be called \textbf{special inner derivations}. In this case, the second relation reads $D(a^{[p]})=-a^{*p-1}D(a)$ and correspond to the power rule of \cite[Section 2]{KP}. 

Following \cite[Section 3]{dokastriplecohomology}, special derivations correspond to Beck derivations: one can check that derivations $A\to M$ correspond to divided power algebra morphisms $A\to A\ltimes_p M$ whose first coordinate is the identity on $A$.
\subsection{Divided power \texorpdfstring{$A$}{A}-algebras}\label{sec:divpowerA-alg}
Later on, we will consider divided power algebras over a divided power algebra $A$, which are equivalently defined by morphisms of divided power algebras $f:A\to B$. In particular, this implies that $B$ is a divided power algebra with an action of $U(A)$.

For a $U(A)$-module $M$, denote by $\Gamma_A(M)=\bigoplus_{n>0}(M^{\otimes_{U(A)}n})^{\∑_n}$, which is equipped with a divided power algebra structure given by the shuffle product and tensor powers, like in Section \ref{sec:freedividedpoweralgebras}. 

Such divided power $A$-algebras were studied by Roby in \cite{roby65alg} in the unital case. We will refer to results of \cite{roby65alg} \textit{mutatis mutandis} to the non-unital case. Applying \cite[Théorème 2]{roby65alg} the functor sending an $A$-algebra $f:A\to B$ to the $U(A)$-module $B$ admits a left adjoint, which we will denote by $\Gamma^+_A$. If $M$ is a $U(A)$-module, then $\Gamma^+_A(M)$ is given by the $U(A)$-module $A\oplus \Gamma_A(M)$, with the map $f:A\to \Gamma^+_A(M)$ given by the inclusion in the first factor. The multiplication in $\Gamma^+_A(M)$ is given by that of $A$, of $\Gamma_A(M)$ given by shuffle product, and by the action of $U(A)$ on $M^{\otimes_{U(A)} n}$. Using \ref{relComsomme}, we can express the divided power operations by:
\[
(a,\underline{m})^{[k]}=(a^{[k]},0)+(0,\underline{m}^{\otimes k})+\sum_{i=1}^{k-1}(0,a^{[i]}\cdot \underline{m}^{\otimes_{U(A)} k-i}).
\]

\subsection{Coproducts and pushouts}
Adapting \cite[Théorème III.3]{roby63} to the non-unital case, we get a natural linear bijection $\Gamma(V\times W)\cong \Gamma(V)\oplus \Gamma(W)\oplus \Gamma(V)\otimes \Gamma(W)$ for all vector spaces $V,W$. Since $\Gamma$ is a left adjoint functor, it preserves coproducts. This implies that the coproduct $\Gamma(V)\coprod\Gamma(W)$ in the category of divided power algebras has underlying vector space given by $\Gamma(V)\oplus \Gamma(W)\oplus \Gamma(V)\otimes \Gamma(W)$.

Similarly, following \cite[Proposition 9]{roby65alg}, if $M,N$ are two $U(A)$-modules, the coproduct of $\Gamma^+_{A}(M)$ and $\Gamma^+_{A}(N)$ in the category of divided power $A$-algebras has underlying vector space given by $A\oplus \Gamma_A(M)\oplus \Gamma_A(N)\oplus \Gamma_A(M)\otimes_{U(A)}\Gamma_A(N)$.

Let $f:A\to B$ and $g:A\to C$ be two morphisms of divided power algebras. The pushout of $f$ and $g$, which is equivalently the coproduct in the category of divided power $A$-algebras, will be denoted by $[f,g]:A\to B\coprod_A C$. Following \cite[Théorème 3]{roby65alg}, $B\coprod_AC$ has underlying vector space given by $(B\oplus_A C)\oplus B\otimes_{U(A)} C$, where $B\oplus_A C$ is the pushout of $f$ and $g$ in vector spaces.
\section{Recollections on differential and tangent structures}\label{sec:recdifftan}
In this section, we recall two notions coming from differential and tangent categories. We start with the definition of a (Rosick\'y) tangent category, due to Cockett and Cruttwell \cite{cockett2014differential}. We then review the definition of a differential combinator for a monad over a category with biproduct \cite{ikonicoff2021cartesian}, and following \cite{ikonicoffLL24}, we show how such a differential combinator is used to define a tangent structure on the category of algebras over the given monad.

In this section, the base category $\X$ is always assumed to admit finite limits and colimits.
\subsection{Tangent categories}
Following \cite[Definition 2.3 and Section 3.3]{cockett2014differential}, a \textbf{(Rosick\'y) tangent structure} on a category $\mathbb{X}$ is a sextuple $\mathbb{T} := (\mathsf{T}, p, s, z, l, c)$ (resp. a septuple ${\mathbb{T} := (\mathsf{T}, p, s, z, l, c, n)}$) consisting of: 
\begin{enumerate}[label=(\roman*)]
\item An endofunctor $\mathsf{T}: \mathbb{X} \to  \mathbb{X}$, called the \textbf{tangent bundle functor},
\item A natural transformation $p_A: \mathsf{T}(A) \to A$, called the \textbf{projection}, such that for each $n\in \mathbb{N}$, the $n$-fold pullback\footnote{By convention, $\mathsf{T}_0(A) = A$ and $\mathsf{T}_1(A) = \mathsf{T}(A)$} of $p_A$ exists, denoted by $\mathsf{T}_n(A)$ with projections $q_j: \mathsf{T}_n(A) \to \mathsf{T}(A)$, and such that for all $m \in \mathbb{N}$, $\mathsf{T}^m:=\mathsf{T}\circ\dots\circ\mathsf{T}$ preserves these pullbacks, that is, $\mathsf{T}^m( \mathsf{T}_n(A))$ is the $n$-fold pullback of $\mathsf{T}^m(p_A)$ with projections $\mathsf{T}^m(q_j)$,  
\item A natural transformation\footnote{Note that by the universal property of the pullback, it follows that we can define functors $\mathsf{T}_n: \mathbb{X} \to \mathbb{X}$.} $s_A: \mathsf{T}_2(A) \to \mathsf{T}(A)$, called the \textbf{sum},
\item A natural transformation $z_A: A \to \mathsf{T}(A)$, called the \textbf{zero map},
\item A natural transformation $l_A: \mathsf{T}(A) \to \mathsf{T}^2(A)$, called the \textbf{vertical lift},
\item A natural transformation $c_A: \mathsf{T}^2(A) \to \mathsf{T}^2(A)$, called the \textbf{canonical flip}, 
\item (And if Rosick\'y, a natural transformation ${n_A: \mathsf{T}(A) \to \mathsf{T}(A)}$, called the \textbf{negative map},)
\end{enumerate}
such that the equalities in \cite[Definition 2.3]{cockett2014differential} (and if Rosick\'y, also \cite[Definition 3.3]{cockett2014differential}) are satisfied. A \textbf{(Rosick\'y) tangent category} is a pair $(\mathbb{X}, \mathbb{T})$ consisting of a category $\mathbb{X}$ equipped with a (Rosick\'y) tangent structure $\mathbb{T}$ on $\mathbb{X}$.
\subsection{Differential combinator}\label{sec:diffcomb}
Let $\mathbb{X}$ be a semi-additive category and $(\mathsf{S}, \mu, \eta)$ be a monad on $\mathbb{X}$. Following \cite[Example 3.13]{ikonicofflemay23},  A \textbf{differential combinator} on $(\mathsf{S}, \mu, \eta)$ is a natural transformation $\partial_A: \mathsf{S}(A) \to \mathsf{S}(A \times A)$ such that the following equalities hold: 
\begin{description}
\item [{\bf [DC.1]}] $\mathsf{S}(\pi_1) \circ \partial_A = 0$
\item [{\bf [DC.2]}] $\mathsf{S}(\langle \pi_1, \pi_2, \pi_2 \rangle) \circ \partial_A = \mathsf{S}(\langle \pi_1, \pi_2, 0 \rangle) \circ \partial_A + \mathsf{S}(\langle \pi_1, 0, \pi_2\rangle) \circ \partial_A$
\item [{\bf [DC.3]}] $\partial_A \circ \eta_A = \eta_{A \times A} \circ \langle 0, 1_A \rangle$
\item [{\bf [DC.4]}] $\partial_A \circ \mu_A = \mu_{A \times A} \circ \mathsf{S}\left( \mathsf{S}(\langle 1_A, 0 \rangle) \circ \pi_1 + \partial_A \circ \pi_2 \right) \circ \partial_{\mathsf{S}(A)}$
\item [{\bf [DC.5]}] $\mathsf{S}(\langle \pi_1, \pi_4 \rangle) \circ \partial_{A \times A} \circ \partial_A = \partial_A$
\item [{\bf [DC.6]}] $\mathsf{S}\left( \left \langle \pi_1, \pi_3, \pi_2, \pi_4 \right \rangle \right)  \circ \partial_{A \times A} \circ \partial_A = \partial_{A \times A} \circ \partial_A$
\end{description}
\subsection{Induced Tangent structure}\label{section:inducedtangentstructure}
Let $(\mathsf{S}, \mu, \eta)$ be a monad on $\X$. We denote by $\S_\alg$ the category of $\S$-algebras. The objects of $\S_\alg$ are then pairs $(A,\alpha)$ where $A$ is an object of $\X$, and $\alpha:\S(A)\to A$, the structural morphism, satisfies the usual compatibility relations with $\mu_A$ and $\eta_A$. Following \cite[Theorem 3.9]{ikonicoffLL24}, if $\mathsf{S}$ is equipped with a differential combinator $\partial$, then there is a tangent structure on the category on $\S_\alg$ such that:
\begin{enumerate}[label=(\roman*)]
\item The tangent bundle functor is the functor $\mathsf{T}: \S_\alg \to \S_\alg$ defined on objects by:
\begin{align*}
\mathsf{T}(A,\alpha) = \left(A \times A, \left\langle \alpha \circ \mathsf{S}(\pi_1), \alpha \circ \mathsf{S}\left( \pi_1 + \pi_4 \right) \circ \partial_{A \times A} \right \rangle \right),
\end{align*}
and on maps by $\mathsf{T}(f) = f \times f$;
\item The projection is the natural transformation $p_{(A,\alpha)}:\mathsf{T}(A,\alpha) \to (A, \alpha)$ defined by $p_{(A,\alpha)}= \pi_1$, and where the $n$-fold pullback of $p_{(A,\alpha)}$ is:
\begin{multline*}
\mathsf{T}_n(A,\alpha)=\Bigg( \prod\limits^{n+1}_{i=1} A,  \big\langle \alpha \circ \mathsf{S}(\pi_1), \alpha \circ \mathsf{S}\left( \pi_1 + \pi_4 \right) \circ \partial_{A \times A} \circ \mathsf{S}(\langle \pi_1, \pi_2 \rangle), \\\hdots, \alpha \circ \mathsf{S}\left( \pi_1 + \pi_4 \right) \circ \partial_{A \times A}  \circ \mathsf{S}(\langle \pi_1, \pi_{n+1} \rangle) \big\rangle  \Bigg)
\end{multline*}
with pullback projections ${q_j: \mathsf{T}_n(A,\alpha) \to \mathsf{T}(A,\alpha)}$ defined by: $q_j = \langle \pi_1, \pi_{j+1} \rangle$.
\item The sum is the natural transformation $s_A: \mathsf{T}_2(A,\alpha) \to \mathsf{T}(A,\alpha)$ defined by $s_{(A,\alpha)} = \langle \pi_1, \pi_2 + \pi_3 \rangle$.
\item The zero map is the natural transformation $z_{(A,\alpha)}: (A, \alpha) \to \mathsf{T}(A,\alpha)$ defined by $z_{(A,\alpha)} = \langle 1_A, 0 \rangle$.
\item The vertical lift is the natural transformation $l_{(A,\alpha)}:\mathsf{T}^2(A,\alpha) \to \mathsf{T}(A,\alpha)$ defined by $l_{(A,\alpha)} = \langle \pi_1, 0,0, \pi_2 \rangle$.
\item The canonical flip is the natural transformation $c_{(A,\alpha)}: \mathsf{T}^2(A,\alpha) \to \mathsf{T}^2(A,\alpha)$ defined by $c_{(A,\alpha)} = \!\langle \pi_1, \pi_3, \pi_2, \pi_4 \rangle$.
\end{enumerate}
If $\mathbb{X}$ is also an additive category, then this can be upgraded to a Rosick\'y tangent structure where: 
\begin{enumerate}[label=(\roman*)]
\setcounter{enumi}{6}
\item The negative map is the natural transformation $n_{(A,\alpha)}:  \mathsf{T}(A,\alpha) \to \mathsf{T}(A,\alpha)$ defined by $n_{(A,\alpha)} = \langle \pi_1, -\pi_2 \rangle$.
\end{enumerate}
\section{Differential and tangent structures for divided power algebras}\label{sec:difftandivpowalg}
In this section, we combine the notions introduced in Sections \ref{sec:recdivpowealg} and \ref{sec:recdifftan}: we recall the definition of a differential combinator for the monad governing divided power algebras introduced in \cite{ikonicoff2021cartesian}, and we use this combinator to build a tangent structure on the category of divided power algebra (see Theorem \ref{theo:tanstructdivpow}), which, as we show, is given by the semidirect product. We then show that this tangent structure admits an adjoint tangent structure using results of \cite{ikonicoffLL24}. This provides a tangent structure on the opposite of the category of divided power algebras (see Theorem \ref{theo:tanstructdivschemes}). We study this tangent structure, which is given by a certain notion of Kähler differentials for divided power algebras, which resemble the Zariski cotangent space, or Grothendiek \emph{fibré tangent} for affine schemes.
\subsection{Differential combinator for divided power algebras}\label{section:diffcombfordivpowalg} Following \cite[Proposition 6.2]{ikonicofflemay23}, the monad $\Gamma$ is equipped with a differential combinator transformation  $\partial_V:\Gamma(V)\to \Gamma(V\times V)$ such that 
\[\partial_V(v_1^{[r_1]}*\hdots* v_n^{[r_s]})=\sum_{i=1}^s(v_1,0)^{[r_1]}*\hdots* (v_i,0)^{[r_i-1]} *\hdots* (v_s,0)^{[r_s]} * (0,v_i)^{[1]} .\]
Here, when $r_i=1$, the factor $(v_i,0)^{[r_i-1]}$ is omitted.
\subsection{Induced tangent structure}\label{sec:inducedtangentstructure}
\begin{theo}\label{theo:tanstructdivpow}
The category of divided power algebras is equipped with a Rosick\'y tangent structure given by:
    \begin{enumerate}[label=(\roman*)]
    \item A tangent bundle functor $\T:\Gamma_{\alg}\to \Gamma_{\alg}$ defined on objects by $\T: A\mapsto A\ltimes_p A$, where $A\ltimes_p A$ is the semidirect product of $A$ seen as a $U(A)$-module by $A$. Following \cite{ikonicoffLL24}, we denote by $\T^n$ the $n$-th iteration of the functor $\T$. Note that the underlying vector space of $\T^n A$ is isomorphic to $A^{\times 2^n}$.
    \item A natural projection $p_A:\T A\to A$ defined by the first projection $A\ltimes_p A\to A$, $p_A(a,b)=a$, whose $n$-fold pullback is going to be denoted by $\T_n A$. Note that the underlying vector space of $\T_n A$ is isomorphic to $A^{\times n+1}$.
    \item A natural sum map $s_A:\T_2 A\to \T A$, defined by $s_A(a,b,c)=(a,b+c)$.
    \item A natural zero map $z_A:A\to \T A$, defined by $z_A(a)=(a,0)$,
    \item A natural vertical lift $l_A:\T A\to \T^2 A$, defined by $l_A(a,b)=(a,0,0,b)$,
    \item A natural canonical flip $c_A:T^2 A\to T^2 A$ defined by $c_A(a,b,c,d)=(a,c,b,d)$,
    \item A natural negative map $n_A:\T A\to \T A$ defined by $n_A(a,b)=(a,-b)$.
\end{enumerate}
\end{theo}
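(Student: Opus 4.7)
The plan is to apply the general construction recalled in Section \ref{section:inducedtangentstructure}: a differential combinator on a monad over a semi-additive (respectively additive) base category induces a (Rosický) tangent structure on the category of algebras. Since vector spaces form an additive category and the monad $\Gamma$ carries the differential combinator $\partial$ recalled in Section \ref{section:diffcombfordivpowalg}, this machinery produces a Rosický tangent structure on $\Gamma_{\alg}$ essentially for free.

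It remains to identify the resulting data with the explicit formulas in the statement. Using the biproduct identifications $\T^n A \cong A^{\times 2^n}$ and $\T_n A \cong A^{\times n+1}$ in the underlying category of vector spaces, the natural transformations $p_A, s_A, z_A, l_A, c_A, n_A$ are obtained by transcribing the formulas of Section \ref{section:inducedtangentstructure} and match the claimed expressions verbatim. The only substantive task is to check that the divided power algebra structure on $A\times A$ induced by the structural map
\[
\alpha_{\T A} \;=\; \langle\, \alpha \circ \Gamma(\pi_1),\ \alpha \circ \Gamma(\pi_1 + \pi_4) \circ \partial_{A \times A}\, \rangle
\]
coincides with the semidirect product $A \ltimes_p A$ of $A$ with itself, seen as a $U(A)$-module with trivial $p$-map.

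The strategy for this verification is to evaluate $\alpha_{\T A}$ on a generic divided power monomial $(a_1, b_1)^{[r_1]} * \cdots * (a_s, b_s)^{[r_s]}$ of $\Gamma(A \times A)$ using the explicit formula for $\partial$ from Section \ref{section:diffcombfordivpowalg}. The first component immediately returns $a_1^{[r_1]} * \cdots * a_s^{[r_s]}$; after applying $\partial_{A \times A}$ and $\Gamma(\pi_1 + \pi_4)$, the second produces $\sum_{i=1}^s a_1^{[r_1]} * \cdots * a_i^{[r_i - 1]} * \cdots * a_s^{[r_s]} * b_i$. Specialising to a product of two letters yields $(a,b)*(c,d) = (ac,\,ad + bc)$, matching the semidirect-product multiplication on the nose; specialising to a single $p$-th divided power gives $(a,b)^{[p]} = (a^{[p]},\, a^{[p-1]} * b)$.

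The main obstacle is reconciling this last expression with the semidirect-product formula $(a^{[p]},\, -a^{*p-1} \cdot b)$ from Section \ref{sec:divpowerA-alg}. This uses the identity $a^{*n} = n!\, a^{[n]}$, which follows inductively from \ref{relComrepet}, together with Wilson's theorem $(p-1)! \equiv -1 \pmod{p}$, to obtain $a^{*p-1} = -a^{[p-1]}$. By Soublin's theorem recalled in Section \ref{sec:defipuisdiv}, all divided powers in characteristic $p$ are generated by the $p$-th divided power and the multiplication, so matching those two operations suffices to conclude that the induced divided power algebra structure on $A\times A$ is precisely $A\ltimes_p A$.
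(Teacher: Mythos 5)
Your proposal is correct and follows essentially the same route as the paper: invoke the general theorem of \cite{ikonicoffLL24} for the monad $\Gamma$ with the combinator $\partial$, then verify that the induced structure map on $A\times A$ reproduces the semidirect-product multiplication and the $p$-th divided power $(a,b)^{[p]}=(a^{[p]},-a^{*p-1}\cdot b)$ via $a^{\otimes p-1}=\tfrac{1}{(p-1)!}a^{*p-1}$ and $(p-1)!\equiv -1 \pmod p$. The only (welcome) difference is that you make explicit the appeal to Soublin's theorem to justify that checking the product and the $p$-th power suffices, a point the paper leaves implicit.
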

We will refer to this tangent structure as the \textbf{algebraic tangent structure on divided power algebras}.
\begin{proof}
    We apply \cite[Theorem 3.9]{ikonicoffLL24} (see Section \ref{section:inducedtangentstructure}) to the monad $\Gamma$ equipped with the differential combinator $\partial$ defined in Section \ref{section:diffcombfordivpowalg}. Let $A$ be a divided power algebra and denote by $\alpha:\Gamma(A)\to A$ its structural morphism. We denote the multiplication inside $A$ by concatenation, while we denote the shuffle product in $\Gamma(A)$ by a star, so that $ab=\alpha(a*b)$ for all $a,b\in A$. We denote by $a^{[p]}$ the divided $p$-th power of $a$ in $A$, while we keep $a^{\otimes p}\in \Gamma(A)$, so that $a^{[p]}=\alpha(a^{\otimes p})$.
    The only thing we have to check is that the divided power algebra $\T(A)$ is indeed $A\ltimes_pA$, the rest of the tangent structure is easily derived from \cite[Theorem 3.9]{ikonicoffLL24}.
    
    The divided power algebra $\T(A)$ is defined as the product of vector space $A\times A$ equipped with the structural map 
    \[
    \T(\alpha)=\left\langle \alpha \circ \Gamma(\pi_1), \alpha \circ \Gamma\left( \pi_1 + \pi_4 \right) \circ \partial_{A \times A} \right \rangle:\Gamma(A\times A)\to A\times A.
    \]
    We use the same convention as above, using concatenation and $(-)^{[p]}$ for the divided power algebra structure of $\T(A)$ and $*$ and $(-)^{\otimes p}$ for the divided power algebra structure on $\Gamma(A\times A)$ coming from the shuffle product. Let $(a_1,b_1),(a_2,b_2)\in A\times A$. Then, on one hand:
    \begin{equation*}
        \alpha \circ \Gamma(\pi_1)((a_1,b_1)*(a_2,b_2))=(\alpha(a_1*a_2))=a_1a_2,
    \end{equation*}
    and on the other hand,
    \begin{multline*}
        \alpha \circ \Gamma\left( \pi_1 + \pi_4 \right) \circ \partial_{A \times A}((a_1,b_1)*(a_2,b_2))\\=\alpha\circ\Gamma(\pi_1+\pi_4)((a_1,b_1,0,0)*(0,0,a_2,b_2)+(a_2,b_2,0,0)*(0,0,a_1,b_1)\\
        =\alpha(a_1*b_2+a_2*b_1)=a_1b_2+a_2b_1,
    \end{multline*}
    so in $\T(A)$, $(a_1,b_1)(a_2,b_2)=(a_1a_2,a_1b_2+a_2b_1)$.

    For the divided $p$-th power, let $(a,b)\in A\times A$. On one hand,
    \[
    \alpha\circ \Gamma(\pi_1)\left((a,b)^{\otimes p}\right)=\alpha\left(a^{\otimes p}\right)=a^{[p]},
    \]
    and on the other hand,
    \begin{align*}
        \alpha \circ \Gamma\left( \pi_1 + \pi_4 \right) \circ \partial_{A \times A}\left((a,b)^{\otimes p}\right)&=\alpha\circ\Gamma(\pi_1+\pi_4)\left((a,b,0,0)^{\otimes p-1}*(0,0,a,b)\right)\\
        &=\alpha\left(a^{\otimes p-1}*b\right)\\
        &=\alpha\left(\frac{1}{(p-1)!}a^{*p-1}*b\right)=-a^{p-1}b,
    \end{align*}
    so in $\T(A)$, $(a,b)^{[p]}=\left(a^{[p]},-a^{p-1}b\right)$, and so finally, $\T(A)=A\ltimes_pA$.
\end{proof}
\subsection{Adjoint tangent structure}\label{sec:adjtanstruct}
The category of divided power algebras is complete and cocomplete. In particular, it admits reflexive coequalisers, and so, following \cite[Theorem 3.11]{ikonicoffLL24}, the above tangent structure admits an adjoint tangent structure, and the opposite category of divided power algebras is equipped with a Cartesian Rosick\'y tangent structure. The aim of this section is to identify this adjoint tangent structure.

Let $A$ be a divided power algebra, whose product will be denoted by concatenation $(a,b)\mapsto ab$, and whose divided power operations will be denoted by $a\mapsto a^{[n]}$. Denote by $U(A)$ the underlying commutative algebra, and $\Omega_{U(A)}$ its module of Kähler differentials. Recall that $\Omega_{U(A)}$ is the quotient of the free $U(A)$-module generated by symbols $\d a$ for $a\in A$ under the relations:
$$\d(a+\lambda b)=\d a+\lambda\d b\qquad \d(ab)=a\d b+b\d a\qquad \forall a,b\in A,\ \forall \lambda\in\F.$$
Define $\Omega_A^1$ to be the quotient of $\Omega_{U(A)}$ by the submodule generated by the relations $\d \left(a^{[n]}\right)-a^{[n-1]}\d a$\footnote{Comparing with \cite[Section 4.1]{dokastriplecohomology}, this corresponds to the quotient of the module $\Omega^A_p$ by all elements of the form $Pda$.}. We call $\Omega^1_A$ the module of \textbf{strongly abelian Kähler differentials} of $A$. Denote by $\T^\circ(A)=\Gamma^+_A(\Omega_A^1)$, where $\Gamma^+_A$ was defined in Section \ref{sec:divpowerA-alg}.

We will denote by $(a,b)\mapsto a*b$ the multiplication of $\T^\circ(A)$ and by $a\mapsto \gamma_n(a)$ its $n$-th divided power. From what precedes, $\T^\circ(A)$ has three types of generators, of type $a$, $\d a$ and $a\d b$, for $a,b\in A$, satisfying the following relations, for all $a,a',b,b',c\in A$ and $\lambda\in\F$:
\[
    \d (a+\lambda b)=\d a+\lambda \d b\qquad \d (ab)=a\d b+b\d a\qquad \d \left(a^{[n]}\right)-a^{[n-1]}\d a
\]
\[
    a*b=ab\qquad\gamma_n(a)=a^{[n]}
\]
\[
a\d b*a'\d b'=aa'*\d b*\d b'\qquad  a*\d b=a\d b\qquad a*b\d c=(ab)\d c=ab*\d c
\]
The relation $a*db=adb$ shows that $\T^\circ A$ is in fact generated by $a$ and $da$ for $a\in A$.

Note that the second iteration ${\T^\circ}^2 A$ of $\T^\circ $ on $A$ will have generators of type:
\[a,\quad \d a,\quad \d' a, \quad \d'\d a,\]
for $a\in A$.
\begin{prop}
    The functor $\T^\circ$ is left adjoint to the functor $\T$ defined above by the semidirect product.
\end{prop}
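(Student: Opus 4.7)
The plan is to exhibit, for every pair of divided power algebras $A$ and $B$, a natural bijection $\Gamma_{\alg}(\T^\circ(A), B) \cong \Gamma_{\alg}(A, \T(B))$, and to verify naturality in both arguments. First, I unpack the right-hand side: a divided power algebra morphism $g: A \to B \ltimes_p B$ decomposes as $g(a) = (f(a), D(a))$, and the multiplication and divided $p$-power formulas on $B \ltimes_p B$ computed at the end of the proof of Theorem \ref{theo:tanstructdivpow} force $f: A \to B$ to be a divided power algebra morphism and $D: A \to B$ to be a linear map satisfying $D(ab) = f(a) D(b) + f(b) D(a)$ and $D(a^{[p]}) = -f(a)^{p-1} D(a)$. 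This is precisely the data of a divided power algebra morphism $f$ together with a special derivation $D$ from $A$ to $B$ viewed as a divided power $A$-module via $f$ with trivial $p$-map.

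Next, I unpack the left-hand side. A morphism $h: \T^\circ(A) = \Gamma^+_A(\Omega^1_A) \to B$ composed with the canonical inclusion $A \hookrightarrow \Gamma^+_A(\Omega^1_A)$ yields a divided power algebra morphism $f: A \to B$, which equips $B$ with the structure of a divided power $A$-algebra. By the free--forgetful adjunction recalled in Section \ref{sec:divpowerA-alg}, the datum of $h$ over $f$ is then equivalent to a $U(A)$-linear map $\tilde D: \Omega^1_A \to B$, itself determined by the linear map $D: A \to B$, $a \mapsto \tilde D(\d a)$. The defining relations of $\Omega^1_A$ translate into the conditions that $D$ is a derivation over $f$ and satisfies the divided power rule $D(a^{[n]}) = f(a)^{[n-1]} D(a)$ for every $n$.

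It remains to match the two divided power conditions. By relation \ref{relComcomp1}, $a^{*(p-1)} = (p-1)!\, a^{[p-1]}$, and Wilson's theorem $(p-1)! \equiv -1 \pmod p$ gives $a^{[p-1]} = -a^{p-1}$, so at $n = p$ the condition $D(a^{[p]}) = f(a)^{[p-1]} D(a)$ coincides with $D(a^{[p]}) = -f(a)^{p-1} D(a)$. The main obstacle, and key step, is to promote this to an equivalence for all $n$. I will use Soublin's theorem recalled in Section \ref{sec:defipuisdiv}: in characteristic $p$ every divided power operation is generated by $(-)^{[p]}$ and the multiplication via the Soublin relations \ref{relComp1}--\ref{relComp3}, so that the submodule of $\Omega_{U(A)}$ generated by the relations $\d(a^{[n]}) - a^{[n-1]} \d a$ is already generated by the single relation at $n = p$. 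Equivalently, by induction on $n$ using the Leibniz rule and \ref{relComcomp2}, any derivation $D$ over $f$ satisfying the $n = p$ condition automatically satisfies the divided power rule at every $n$. In characteristic zero the matter is immediate since $a^{[n]} = \frac{a^n}{n!}$. Naturality in $A$ and $B$ then follows from the functoriality of all constructions involved.
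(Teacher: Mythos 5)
Your proposal is correct and follows essentially the same route as the paper: both hom-sets are unpacked into the data of a divided power algebra morphism $f\colon A\to B$ together with a special derivation $D\colon A\to B$ over $f$ (for $B$ with trivial $p$-map), using the presentation of $\Omega^1_A$ and the free--forgetful adjunction for $\Gamma^+_A$ on one side, and the explicit semidirect product structure on the other. You are in fact more explicit than the paper about the one delicate point — that the single condition $D(a^{[p]})=-f(a)^{p-1}D(a)$ is equivalent to $D(a^{[n]})=f(a)^{[n-1]}D(a)$ for all $n$, via Wilson's theorem, Soublin's generation result and an induction using the Leibniz rule together with \ref{relComrepet} and \ref{relComcomp2} — a reduction the paper uses implicitly when it asserts that $g_1(a^{[n]})=g_0(a^{[n-1]})g_1(a)$ for every $n$ and that $f^\flat$ preserves all divided powers.
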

\begin{proof}
    Let $A,B$ be two divided power algebras, whose product will be denoted by concatenation and divided powers will be denoted by $a\mapsto a^{[n]}$. We shall show that there is a bijection, natural in $A$ and $B$, between the set of morphisms of divided power algebras $\Gamma^+_A(\Omega_{U(A)})\to B$ and the set of morphisms of divided power algebras $A\to B\ltimes_p B$.

    For clarity, we will denote by $(a,b)\to a*b$ and $a\mapsto\gamma_n(a)$ the product and divided powers in $\T^\circ(A)$ and $B\ltimes_p B$.

    Let $f:\Gamma^+_A(\Omega_A^1)\to B$ be a morphism of divided power algebras. Using the decomposition $\Gamma^+_A(\Omega_A^1)=A\oplus \Gamma_A(\Omega_A^1)$, the map $f$ corresponds to the sum of a map $f_0:A\to B$ and a map $f_1:\Gamma_A(\Omega_A^1)\to B$. Consider the map $f^\flat:A\to B\ltimes_p B$ sending $a\in A$ to $(f_0(a),f_1(\d a))$.

    Let us show that $f^\flat$ is a morphism of divided power algebras. Let $a,b\in A$. On one hand,
    \begin{align*}
        f^\flat(ab)&=(f_0(ab),f_1(\d (ab)))\\
        &=(f_0(a)f_0(b),f_1(a\d b)+f_1(b\d a))\\
        &=(f_0(a)f_0(b),f_0(a)f_1(\d b)+f_0(b)f_1(\d a))\\
        &=(f_0(a),f_1(b))*(f_0(b),f_1(b))\\
        &=f^\flat(a)*f^\flat(b).
    \end{align*}
    On the other hand,
    \begin{align*}
        f^\flat(a^{[n]})&=(f_0(a^{[n]}),f_1(\d (a^{[n]})))\\
        &=(f_0(a)^{[n]},f_1(a^{[n-1]}\d a))\\
        &=(f_0(a)^{[n]},f_0(a^{[n-1]})f_1(\d a))\\
        &=(f_0(a)^{[n]},f_0(a)^{[n-1]}f_1(\d a))\\
        &=\gamma_n(f_0(a),f_1(\d a))\\
        &=\gamma_n(f^\flat(a)),
    \end{align*}
    So $f^\flat$ is indeed a morphism of divided power algebras.

    Conversely, let $g:A\to B\ltimes_p B$ be a morphism of divided power algebras, which we decompose as a sum $g_0+g_1$ with $g_0,g_1:A\to B$. Note that $g_0$ is a map of divided power algebras, making $B$ into a $U(A)$-module, and that $g_1$ satisfies $g_1(ab)=g_0(a)g_1(b)+g_0(b)g_1(a)$ and $g_1(a^{[n]})=g_0(a^{[n-1]})g_1(a)$. We define a map $g^\sharp:\Gamma^+_A(\Omega_A^1)\to B$ by setting $g^\sharp_0=g_0:A\to B$, and extending $g^\sharp_1(\d a)=g_1(a)$ first into a morphism of $U(A)$-modules $\Omega_A^1\to B$, then into a morphism of divided power algebras $g^\sharp_1:\Gamma_A(\Omega_A^1)\to B$. To ensure that $g^\sharp_1$ is well defined, note that $g^\sharp_1(\d (ab))=g_1(ab)=g_0(a)g_1(b)+g_0(b)g_1(a)=g^\sharp_1(a\d b+b\d a)$, and that $g^\sharp_1(\d (a^{[n]}))=g_1(a^{[n]})=g_0(a^{[n-1]}g_1(a)=g^\sharp_1(a^{[n-1]}\d a)$.

    Now, consider the sum $g^\sharp:\Gamma^+_A(\Omega_A^1)\to B$ of the maps  $g^\sharp_0:A\to B$ and $g^\sharp_1:\Gamma_A(\Omega_A^1)\to B$. Since the divided power structures on $\Gamma^+_A(\Omega_A^1)$ is entirely determined by that of $A$ and $\Gamma_A(\Omega_A^1)$, it is clear that $g^\sharp$ is a morphism of divided power algebras.

    Now, to show that the assignments $f\mapsto f^\flat$ and $g\mapsto g^\sharp$ are mutually inverse, note, first, that a map of divided power algebras $\Gamma^+_A(\Omega_A^1)\to B$ is entirely determined by the image of the elements $a\in A$ and $\d a\in\Omega_A^1$. Now, $(f^\flat)^\sharp_0=f^\flat_0=f_0$, and $(f^\flat)^\sharp_1(\d a)=(f^\flat)_1(a)=f(\d a)$. So $(f^\flat)^\sharp=f$. Inversely, $(g^\sharp)^\flat(a)=(g^{\sharp}_0(a),g^\sharp_1(\d a))=(g_0(a),g_1(a))=g(a)$, so $(g^\sharp)^\flat=g$.
\end{proof}
From what precedes, the unit $\eta_A:A\to \T\T^\circ A=\T^\circ A\ltimes_p \T^\circ A$ is given by $(\id_{\T^\circ A})^\flat$, which sends $a\in A$ to $(a,\d a)$, and the counit $\varepsilon_A:\T^\circ \T A=\T^\circ(A\ltimes_p A)\to A$ is given by $(\id_{A\ltimes_p A})^\sharp$, which is defined on generators by:
\[
\varepsilon_A(a,a')=a \qquad \varepsilon_A(\d (a,a'))=a'.
\]
As a corollary, and following \cite[Proposition 5.17]{cockett2014differential}, we may now identify the adjoint tangent structure:

\begin{theo}\label{theo:tanstructdivschemes}
    The opposite category $\Gamma_{\alg}^{op}$ of the category of divided power algebras is equipped with a Rosick\'y tangent structure given by:
\begin{enumerate}[label=(\roman*)]
    \item The tangent bundle functor $\T^\circ:\Gamma_{\alg}^{op}\to \Gamma_{\alg}^{op}$ defined above. We denote by $(\T^\circ)^n$ the $n$-th iteration of the functor $\T$.
    \item A natural projection whose opposite $p^\circ_A:A\to \T^\circ A$ is defined by $p^\circ_A=p_{\T^\circ A}\circ \eta_A$, $p^\circ_A(a)=a$, the $n$-fold pushout of which is going to be denoted by $\T^\circ_n A$ and is equal to $(\T^\circ A)^{\coprod_A n}$, where $\coprod_A$ is the coproduct of divided power algebras over $A$. Note in particular that $\T^\circ_2 A$ is generated by elements $a$, $\d_1 a$ from the first summand, and $\d_2a$ from the second summand.
    \item A natural sum map whose opposite $s_A:\T^\circ A\to \T^\circ_2 A$ is defined by
    \[s_A^\circ(a)=a,\qquad s_A^\circ(\d a)=\d_1a+\d_2a.\]
    \item A natural zero map whose opposite $z^\circ_A:\T^\circ A\to A$, is defined by the projection on $A$:
    \[
    z^\circ_A(a)=a,\qquad z^\circ_A(\d a)=0,
    \]
    \item A natural vertical lift whose opposite $l_A^\circ:{\T^\circ}^2 A\to \T A$, defined by:
        \[l_A^\circ(a)=a,\quad l_A^\circ(\d a)=l_A^\circ(\d' a)=0, \quad l_A^\circ(\d'\d a)=\d a,\]
    \item A natural canonical flip whose opposite $c^\circ_A:{\T^\circ}^2 A\to {\T^\circ}^2 A$ is defined by:
    \[c_A^\circ(a)=a,\quad c_A^\circ(\d a)=\d' a,\qquad c_A^\circ(\d' a)=\d a, \quad c_A^\circ(\d'\d a)=\d'\d a,\]
    \item a natural negative map whose opposite $n^\circ_A:\T^\circ A\to \T^\circ A$ is defined by:
    \[n^\circ_A(a)=a,\quad n^\circ_A(\d a)=-\d a.\]
\end{enumerate}
\end{theo}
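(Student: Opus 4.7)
The main task is to identify the structure maps of the adjoint tangent structure whose existence on $\Gamma_\alg^{op}$ was already established in Section \ref{sec:adjtanstruct}, where we invoked \cite[Theorem 3.11]{ikonicoffLL24} together with the cocompleteness of $\Gamma_\alg$. Having just exhibited the adjunction $\T^\circ \dashv \T$ explicitly, I would appeal to \cite[Proposition 5.17]{cockett2014differential}, which gives each structure map of the adjoint tangent structure as the mate of the corresponding map of the original structure under the adjunction.

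The strategy is therefore to compute each mate on generators. The unit and counit have already been described explicitly: $\eta_A: A \to \T^\circ A \ltimes_p \T^\circ A$ sends $a$ to $(a, \d a)$, and $\varepsilon_A: \T^\circ(A \ltimes_p A) \to A$ is given on generators by $\varepsilon_A(a, a') = a$ and $\varepsilon_A(\d (a, a')) = a'$. Since any morphism of divided power algebras out of $\T^\circ A$ or of its iterates is determined by its values on the generators of types $a$ and $\d a$ (and $\d' a$, $\d'\d a$ in the iterated case), it suffices to evaluate each mate on these generators.

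I would then work through the seven items in order. For the underlying endofunctor, the $n$-fold pullback $\T_n$ dualises to the $n$-fold pushout in $\Gamma_\alg$, which by the coproduct description recalled in Section \ref{sec:divpowerA-alg} is $(\T^\circ A)^{\coprod_A n}$, with the two canonical copies of $\T^\circ A$ supplying generators $\d_1 a$ and $\d_2 a$ in the case $n=2$. For the projection, a direct computation gives $p_{\T^\circ A} \circ \eta_A(a) = a$. For the sum, zero, vertical lift, canonical flip and negative maps, each mate is obtained by applying $\T^\circ$ to the corresponding original natural transformation and composing with units, counits, and iterated functors; evaluating on generators and using the explicit formulas for the algebraic tangent structure from Theorem \ref{theo:tanstructdivpow} yields the stated formulas. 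For instance, the vertical lift $l_A(a, b) = (a, 0, 0, b)$ transfers to a map $l^\circ_A: {\T^\circ}^2 A \to \T^\circ A$ that sends the four generator types $a, \d a, \d' a, \d'\d a$ to $a, 0, 0, \d a$ respectively, and a similar calculation gives the canonical flip exchanging $\d a$ and $\d' a$ while fixing $a$ and $\d'\d a$.

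The main obstacle is clerical rather than conceptual: one must carefully unwind the mate construction on generators of iterated functors, keeping track in particular of the four distinct generator types of ${\T^\circ}^2 A$ and of which copy of $\T^\circ$ each $\d$ symbol originates from. Once the formulas are verified on generators, every axiom of a Rosický tangent structure follows automatically, since \cite[Proposition 5.17]{cockett2014differential} ensures that the mate construction transports the tangent structure equations already established in Theorem \ref{theo:tanstructdivpow}.
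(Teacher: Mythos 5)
Your proposal matches the paper's own treatment: the paper likewise derives this theorem as a corollary of the explicitly computed adjunction $\T^\circ\dashv\T$ (with its unit $a\mapsto(a,\d a)$ and counit) together with \cite[Proposition 5.17]{cockett2014differential}, identifying each structure map as the mate of the corresponding map of the algebraic tangent structure and reading it off on generators. No gaps; this is essentially the same argument.
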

We will refer to the objects of the opposite category of divided power algebras as \textbf{divided affine schemes}, by analogy with the category of affine schemes, seen as the opposite category of the category of commutative rings as their rings of coordinates. We will then refer to the tangent structure above as the \textbf{geometric tangent structure on divided affine schemes}.

From what precedes, we can formulate a universal property for the divided power algebra $\T^\circ A$. Note that the universal derivation $\d:U(A)\to \Omega_{U(A)}$ induces a special derivation $\d:A\to \T^\circ A$, for $\T^\circ A$ seen as a divided $A$-module with trivial $p$-map. Then, consider a morphism of divider power algebras $f:A\to B$. The universal property reads as follows: for any special derivation $D:A\to B$ for $B$ seen as a divided power $A$-module with trivial $p$-map, there is a unique morphism of divided power algebras $g:\T^\circ(A)\to B$ such that $g(a)=f(a)$ and $g(\d a)=D(a)$.
\section{On the algebraic tangent structure}\label{sec:algtanstruc}
In this section, we study certain geometric concepts – namely, vector fields and differential bundles – induced by the tangent structure on divided power algebras defined in Section \ref{sec:inducedtangentstructure}.
\subsection{Vector fields and special derivations}\label{sec:vectorfieldsalg}
By definition \cite[Definition 3.1]{cockett2014differential}, a \textbf{vector field} on a divided power algebra $A$ in the algebraic tangent structure is a morphism $v:A\to \T A$ of divided power algebras which is a section of $p_A$.

\begin{prop}\label{prop:vectorfieldalg}
    There is a one-to-one correspondence between vector fields in the algebraic tangent structure over $A$ and special inner derivations of $A$.
\end{prop}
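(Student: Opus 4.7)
The plan is to read off the correspondence directly from the definitions, since the proposition is essentially the specialisation, to $M = A$ with trivial $p$-map, of the Beck-module style identification already recorded in Section \ref{section:specialderivations}. A vector field is a morphism of divided power algebras $v : A \to \T A = A \ltimes_p A$ such that $p_A \circ v = \id_A$, and $p_A$ is the first projection. So, at the level of underlying vector spaces, $v$ is forced to have the form $v(a) = (a, D(a))$ for a uniquely determined linear map $D : A \to A$. The assignment $v \mapsto D$ is manifestly a bijection between sections of $p_A$ (in the category of vector spaces) and linear endomorphisms of $A$; what must be checked is that the additional constraint that $v$ be a morphism of divided power algebras translates precisely into the two defining relations of a special inner derivation.

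The first step is multiplicativity. Using the product formula $(a_1, b_1)(a_2, b_2) = (a_1 a_2,\, a_1 b_2 + a_2 b_1)$ on $A \ltimes_p A$ established in the proof of Theorem \ref{theo:tanstructdivpow}, the equation $v(ab) = v(a) v(b)$ unfolds on second coordinates to $D(ab) = a D(b) + b D(a)$, the Leibniz rule. The second step handles the divided powers. By Soublin's theorem recalled in Section \ref{sec:defipuisdiv}, we need only verify compatibility with the $p$-th divided power. The formula for the semidirect product of $A$ by $A$ viewed as a $U(A)$-module with trivial $p$-map gives $(a, m)^{[p]} = (a^{[p]}, -a^{p-1} m)$, so the equation $v(a^{[p]}) = v(a)^{[p]}$ becomes $D(a^{[p]}) = -a^{p-1} D(a)$ on second coordinates. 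This is exactly the condition that distinguishes a special inner derivation from an ordinary derivation, as stated in Section \ref{section:specialderivations}.

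There is no genuine obstacle: the correspondence is a direct unpacking, and naturality in $A$ is automatic since both sides are defined by the same formulae. The only conceptual point worth flagging is that the semidirect product appearing in the tangent structure is precisely the one used in Section \ref{section:specialderivations} to realise special derivations as Beck derivations, which is what makes the identification immediate.
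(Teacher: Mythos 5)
Your proposal is correct and follows essentially the same route as the paper: the paper's proof simply observes that a vector field is a divided power algebra morphism $v:A\to A\ltimes_p A$ whose first coordinate is the identity, and then cites the identification from Section \ref{section:specialderivations} of such morphisms with special inner derivations. You carry out explicitly the verification that the paper leaves as a recorded observation (Leibniz rule from the semidirect product multiplication, and $D(a^{[p]})=-a^{p-1}D(a)$ from the formula for the divided $p$-th power), which is a harmless and accurate expansion of the same argument.
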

\begin{proof}
    A vector field on $A$ is a morphism of divided power algebras $v:A\to A\ltimes_p A$ whose first coordinate $v_0:A\to A$ is the identity. As observed in Section \ref{section:specialderivations}, and following \cite[Section 3]{dokastriplecohomology} these correspond precisely to special inner derivations of $A$.
\end{proof}
\subsection{Differential bundles and modules}

A \textbf{differential bundle} over a divided power algebra $A$ is the data of a morphism of divided power algebras $q:E\to A$ equipped with the structure of a commutative monoid in the slice category over $A$ and with a lift map $\lambda:E\to \T E$ satisfying certain relations. We refer the reader to \cite{MacAdamVectorBundles} for the full definition of differential bundles in a tangent category. We will denote by $\DBun_A$ the category of differential bundles over a divided power algebra $A$.

In \cite[Proposition 6 and Corollary 3]{MacAdamVectorBundles}, MacAdam showed that, in a Rosick\'y tangent category, differential bundles $q:E\to A$ are in fact abelian group objects in the slice category of divided power algebras over $A$, equipped with a lift map.

Abelian group objects in the slice category over a divided power algebra $A$ are also known as Beck $A$-modules. As observed in Section \ref{section:dividedpowermodules}, these correspond to our notion of divided power $A$-modules.

We get the following:
\begin{theo}\label{theo:equivbundlemodulealg}
    There is an equivalence of categories:
    $$\ker:\DBun_A\rightleftarrows U(A)_{\Mod}:A\ltimes_p -$$
\end{theo}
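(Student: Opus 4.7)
The plan is to apply \cite[Proposition 6 and Corollary 3]{MacAdamVectorBundles} to identify differential bundles $q : E \to A$ in the algebraic tangent structure with abelian group objects in the slice category $\Gamma_{\alg}/A$ equipped with a lift map $\lambda : E \to \T E$. By the discussion preceding the statement (together with Section \ref{section:dividedpowermodules}), abelian group objects in $\Gamma_{\alg}/A$ correspond to divided power $A$-modules $(M, \pi)$, the correspondence sending $(M, \pi)$ to $E = A \ltimes_p M$ with $q$ the projection onto the first factor, and recovering $M$ as $\ker(q)$. The heart of the proof is to show that the lift axiom is equivalent to the condition $\pi = 0$, so that the classifying data of a differential bundle is exactly a $U(A)$-module.

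For that heart, I would first argue that the lift is uniquely determined: compatibility with the zero section, with the abelian group structure over $A$, and with the vertical lift $l_A$ of the algebraic tangent structure pins $\lambda$ down to the natural formula $\lambda(a, m) = ((a, 0), (0, m))$ in $\T E = (A \ltimes_p M) \ltimes_p (A \ltimes_p M)$. Preservation of the product is a direct check using the semidirect-product formulas established in the proof of Theorem \ref{theo:tanstructdivpow}. The decisive computation concerns $p$-th divided powers: on one hand, $\lambda((a, m)^{[p]}) = ((a^{[p]}, 0), (0, \pi(m) - a^{p-1} m))$ using the formula for $(a, m)^{[p]}$ in $A \ltimes_p M$ recalled in Section \ref{section:dividedpowermodules}; on the other hand, computing $\lambda(a, m)^{[p]} = ((a, 0), (0, m))^{[p]}$ inside the double semidirect product, where the inner copy of $E$ carries the trivial $p$-map, yields $((a^{[p]}, 0), (0, -a^{p-1} m))$. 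Comparing the two forces $\pi(m) = 0$ for every $m \in M$.

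Conversely, starting from any $U(A)$-module $M$ and viewing it as a divided power $A$-module with trivial $p$-map, so that $A \ltimes_p M$ has divided powers $(a, m)^{[p]} = (a^{[p]}, -a^{p-1} m)$, I would verify that the quadruple $(A \ltimes_p M, q, +, \lambda)$ with the explicit lift above satisfies all the differential bundle axioms of \cite{MacAdamVectorBundles}; this verification parallels the commutative ring case treated in \cite{cruttwellLemay:AlgebraicGeometry} and is routine given the formulas of Theorem \ref{theo:tanstructdivpow}. On morphisms, a $U(A)$-linear map $\phi : M \to M'$ produces the divided power algebra morphism $(a, m) \mapsto (a, \phi(m))$, which respects the divided power operations since both source and target carry the trivial $p$-map on the module part, and every morphism of differential bundles arises this way. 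The natural isomorphisms $\ker \circ (A \ltimes_p -) \cong \id$ and $(A \ltimes_p -) \circ \ker \cong \id$ are induced respectively by the identification of the kernel of $q : A \ltimes_p M \to A$ with $M$, and by the splitting $E \cong A \oplus \ker(q)$ coming from the zero section of any abelian group object over $A$.

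The principal obstacle is the rigidity argument pinning the lift $\lambda$ down to its explicit form $((a, 0), (0, m))$: once that is in place, the $p$-th divided power comparison that forces $\pi = 0$ is a direct calculation, and the remaining verifications closely mirror the commutative-ring situation.
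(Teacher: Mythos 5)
Your proposal is correct and follows essentially the same route as the paper: both reduce via MacAdam's characterisation to abelian group objects (divided power $A$-modules) with a lift, pin the lift down to $\lambda(a,m)=(a,0,0,m)$, and use the comparison of $\lambda((a,m)^{[p]})$ with $\lambda(a,m)^{[p]}$ to force the $p$-map $\pi$ to vanish, deferring the remaining verifications to the commutative-ring case of Cruttwell--Lemay. The only presentational difference is that the paper imports the form of the lift and the underlying equivalence directly from \cite[Theorem 3.13]{cruttwellLemay:AlgebraicGeometry} applied to $U(A)$, whereas you re-derive the lift's rigidity from the axioms; the substance is identical.
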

\begin{proof}
    Following \cite[Theorem 3.13]{cruttwellLemay:AlgebraicGeometry}, there is an equivalence of categories:    $$\ker:\DBun_{U(A)}\rightleftarrows U(A)_{\Mod}:U(A)\ltimes -,$$
    between the categories of differential bundles over the underlying commutative algebra $U(A)$ of $A$ and $U(A)$-modules in the usual sense.

    Let $M$ be a $U(A)$-module. We equip $M$ with a derived $A$-module structure with trivial $p$-map. The underlying commutative algebra of the resulting divided power algebra $A\ltimes_p M$ is precisely $U(A)\ltimes M$. The divided $p$-th power of $A\ltimes_p M$ is given by $(a,m)^{[p]}=(a^{[p]},-a^{* p-1}\cdot m)$. To show that $A\ltimes_p M$ is equipped with the structure of a differential bundle over $A$, it suffices to show that the structure maps giving $U(A)\ltimes M$ the structure of a differential bundle over $A$ are compatible with the divided $p$-th power. The fact that $M$ with trivial $p$-map is a Beck $A$-module ensures that the map giving $U(A)\ltimes M$ the structure of an abelian group object in the slice category over $U(A)$ are compatible with the divided $p$-th power. We have to show that the lift map $\lambda:U(A)\ltimes M\to \T(U(A)\ltimes M)$ is compatible with the divided $p$-th power. 

    Still following the proof of \cite[Theorem 3.13]{cruttwellLemay:AlgebraicGeometry}, the lift is given by $\lambda(a,m)=(a,0,0,m)$, so,
    \[
    \lambda((a,m)^{[p]})=\lambda(a^{[p]},-a^{p-1}m)=(a^{[p]},0,0,-a^{* p-1}\cdot m).
    \]
    On the other hand, inspecting the divided power structure on $\T(A\ltimes_p M)$, one has:
    \[
    (a,0,0,m)^{[p]}=((a,0)^{[p]},-(a,0)^{*p-1}\cdot (0,m))=(a^{[p]},0,0,-a^{*p-1}\cdot m),
    \]
    and so, we get a morphism of divided power algebras $\lambda: A\ltimes_p M\to \T(A\ltimes_p M)$.

    Conversely, let $q:E\to A$ be equipped with the structure of a differential bundle over $A$. Then, $\ker(q)$ is a divided $A$-module and $E\cong A\ltimes_p \ker(q)$. To finish the proof, it suffices to show that the $p$-map $\pi$ of $\ker(q)$ is trivial. Note that, since $\lambda$ is a morphism of divided power algebras $A\ltimes_p \ker(q)\to \T(A\ltimes_p \ker(q))$, one has, for all $a\in A$ and $m\in \ker(q)$,
    \begin{align*}
        (a^{[p]},0,0,\pi(m)-a^{p-1}m)&=\lambda((a,m)^{[p]}),\\
        &=\lambda(a,m)^{[p]},\\
        &=(a,0,0,m)^{[p]},\\
        &=(a^{[p]},0,0,-a^{p-1}m),
    \end{align*}
    and so, $\pi(m)=0$.
\end{proof}
\section{On the geometric tangent structure}\label{sec:geotanstruc}
The geometric tangent structure on divided affine schemes is the tangent structure defined in Section \ref{sec:adjtanstruct} on the opposite category of divided power algebras. In this section, we study the notions of vector field and differential bundles for this tangent structure.
\subsection{Vector fields and special derivations}\label{sec:vectorfieldsch} Following \cite[Lemma 2.9]{ikonicoffLL24}, if a tangent category has adjoint tangent structure, then there is a one-to-one correspondence between vector fields over an object $A$ for the tangent structure, and vector fields over $A$ in the opposite category for the adjoint tangent structure. In consequence, according to Section \ref{sec:vectorfieldsalg}, vector fields over a divided affine scheme $A$ are in one-to-one correspondence with special inner derivations $A\to A$.
\subsection{Differential bundles and modules} Following \cite[Theorem 4.17]{cruttwellLemay:AlgebraicGeometry}, there is an equivalence between the category of differential bundles over an affine scheme $R$ and the category of $R$-modules. Here, we will prove a very similar result regarding divided affine scheme, following an analogous argument.

Recall that a differential bundle over $A$ in the tangent category of divided affine scheme is a morphism of divided power algebra $q:A\to E$ which is equipped with the structure of an abelian cogroup object in the coslice category under $A$, and with a lift map $\lambda:\T^\circ E\to E$ satisfying certain relations. We again refer to \cite{MacAdamVectorBundles} for the full definition. We denote by $\DBun^\circ_A$ the category of differential bundles over a divided affine scheme $A$.

On one hand, let us show how to extract a $U(A)$-module from a differential bundle:
\begin{lemm}
    Let $q: A\to E$ be a differential bundle over $A$ with lift $\lambda:\T^\circ E\to E$. Then, the image $\im(\D_\lambda)$ of the linear map $\D_\lambda:\lambda\circ \d:E\to E$, where $\d:E\to \T^\circ E$ is induced by the universal derivation, is equipped with a $U(A)$-module structure such that $a\cdot \D_\lambda(e)=\D_\lambda(q(a)e)$.
\end{lemm}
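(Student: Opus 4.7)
The plan is to identify $\D_\lambda = \lambda\circ \d$ as a derivation of $E$ that vanishes on the image of $q:A\to E$, and then to use these two properties to define and verify a $U(A)$-module structure on $\im(\D_\lambda)$.

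The two facts about $\D_\lambda$ I need are (a) $\D_\lambda$ satisfies the Leibniz rule $\D_\lambda(e_1 e_2)=e_1\,\D_\lambda(e_2)+e_2\,\D_\lambda(e_1)$ on $E$, and (b) $\D_\lambda\circ q=0$. For (a), by the universal property of $\T^\circ E=\Gamma^+_E(\Omega^1_E)$ recalled at the end of Section~\ref{sec:adjtanstruct}, the map $\d:E\to \T^\circ E$ is a special derivation into $\T^\circ E$ viewed as a divided power $E$-module with trivial $p$-map; in particular $\d(e_1e_2)=e_1\d e_2+e_2\d e_1$. Applying the divided power algebra morphism $\lambda$ and noting that $\lambda$ restricts to the identity on the canonical copy of $E$ inside $\T^\circ E$ (this is the standard axiom for the lift that makes it a section of the projection $p^\circ_E$) yields (a). Fact (b) is the counterpart of the differential bundle axiom expressing compatibility of the lift with the base morphism and the zero map; in the adjoint tangent structure it becomes $\lambda\circ \T^\circ(q)=q\circ z^\circ_A$ in $\Gamma_{\alg}$. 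Since $z^\circ_A$ annihilates the differential generators $\d a$ while $\T^\circ(q)(\d a)=\d q(a)$, we obtain $\lambda(\d q(a))=\D_\lambda(q(a))=0$.

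Given (a) and (b), I would define the action by $a\cdot \D_\lambda(e):=\D_\lambda(q(a)\cdot e)$. If $\D_\lambda(e)=\D_\lambda(e')$, then applying the Leibniz rule (a) and then (b) gives
\[
\D_\lambda(q(a)(e-e'))=q(a)\,\D_\lambda(e-e')+(e-e')\,\D_\lambda(q(a))=0,
\]
so the action is well defined, and $\im(\D_\lambda)$ is visibly closed under it. The module axioms then follow immediately: additivity in both arguments reduces to linearity of $\D_\lambda$ together with bilinearity of the multiplication in $E$, and the associativity $(ab)\cdot x=a\cdot(b\cdot x)$ reduces to the fact that $q$ is a divided power algebra morphism, hence in particular multiplicative.

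The main obstacle is extracting the precise lift-compatibility axioms needed for (a) and (b) from MacAdam's general definition of a differential bundle, applied to the opposite category $\Gamma_{\alg}^{op}$ with the geometric tangent structure. The dualisation is routine but requires carefully tracking the direction of each tangent-structure map through the equivalence between $\Gamma_{\alg}^{op}$ and $\Gamma_{\alg}$; once correctly stated, both $\lambda|_E=\id_E$ and the vanishing $\D_\lambda\circ q=0$ drop out directly, and the rest of the proof is a short computation.
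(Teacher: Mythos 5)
Your overall strategy is the same as the paper's: combine the Leibniz rule for $\d$ with the differential bundle axiom $\lambda\circ\T^\circ(q)=q\circ z$ to get $\D_\lambda(q(a))=0$, and deduce that $\D_\lambda(q(a)e)$ depends only on $a$ and $\D_\lambda(e)$. However, your justification of fact (a) contains a genuine error. You claim that $\lambda$ restricts to the identity on the canonical copy of $E$ inside $\T^\circ E$, calling this ``the standard axiom for the lift that makes it a section of the projection $p^\circ_E$.'' There is no such axiom, and the claim is false: the relevant axiom (the dual of $p_E\circ\lambda=\zeta\circ q$ in MacAdam's definition) reads $\lambda\circ p^\circ_E=q\circ\zeta$, i.e.\ the composite is the projection onto the base copy of $A$, not the identity of $E$. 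Concretely, for the free differential bundles $\Gamma^+_A(M)$ constructed in the next subsection of the paper, the lift satisfies $\lambda(m)=0$ for $m\in M$, so $\lambda\circ p^\circ_E\neq\id_E$ whenever $M\neq 0$. Consequently the Leibniz rule you state for $\D_\lambda$ is wrong in general: applying $\lambda$ to $\d(e_1e_2)=p^\circ_E(e_1)*\d e_2+p^\circ_E(e_2)*\d e_1$ yields
\[
\D_\lambda(e_1e_2)=q(\zeta(e_1))\,\D_\lambda(e_2)+q(\zeta(e_2))\,\D_\lambda(e_1),
\]
with coefficients $q(\zeta(e_i))$ rather than $e_i$.

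The gap is repairable, because every instance of the Leibniz rule you actually invoke has one factor of the form $q(a)$, and $\zeta\circ q=\id_A$ (part of the additive bundle axioms) gives $q(\zeta(q(a)))=q(a)$; thus $\D_\lambda(q(a)e)=q(a)\D_\lambda(e)+q(\zeta(e))\D_\lambda(q(a))=q(a)\D_\lambda(e)$, which is exactly the identity the paper establishes. Note that once you have this identity, your well-definedness check becomes unnecessary: the paper simply observes that $\im(\D_\lambda)$ is a $U(A)$-submodule of $E$ (with $E$ a $U(A)$-module via $q$) and that the inherited action satisfies the stated formula $a\cdot\D_\lambda(e)=\D_\lambda(q(a)e)$. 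You should correct the statement of (a) and its justification before the argument can be accepted.
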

\begin{proof}
    Here we paraphrase the proof of \cite[Lemma 4.9]{cruttwellLemay:AlgebraicGeometry}.
    The morphism $q:A\to E$ makes $E$ into a $U(A)$-module. To show that $\im(\D_\lambda)$ is a $U(A)$-submodule of $E$, it is enough to show that $\D_{\lambda}$ is compatible with the $U(A)$-action. Let $a\in A$, $e\in E$, then one has:
    \[
    \D_\lambda(q(a)e)=\lambda\circ\d(q(a)e)=(\lambda\circ\d\circ q(a)) (\lambda\circ p(e))+(\lambda\circ p\circ q(a))(\lambda\circ \d(e)),
    \]
    where $p:E\to T^{\circ}E$ is the structural projection map. Note that $\d(q(a))=T^{\circ}(q)(\d a)$. One of the axioms for $q:A\to E$ to be a differential bundle reads $\lambda\circ T^\circ(q)=q\circ z:T^{\circ} A\to E$. This implies that $\lambda\circ\d\circ q(a)=\lambda\circ T^{\circ}(q)(\d a)=q\circ z(\d a)$. But $z(\d a)=0$ by definition of $z$. Moreover, $\lambda\circ T^\circ(q)=q\circ z$ also implies that $\lambda\circ p\circ q(a)=\lambda \circ T^{\circ}(q)(a)=q\circ z(a)=q(a)$, so, $\D_\lambda(q(a)e)=q(a)\D_\lambda(e)$, which concludes the proof.
\end{proof}
On the other hand, given a $U(A)$-module $M$, we show that $\Gamma^+_A(M)$ is equipped with a differential bundle structure over $A$. Recall from Section \ref{sec:divpowerA-alg} that $\Gamma^+_A(M)$ is the free divided power algebra over $A$ generated by $M$. Then consider:
\begin{enumerate}[label=(\roman*)]
    \item The injection $q:A\to \Gamma^+_A(M)$. Still according to Section \ref{sec:divpowerA-alg}, the pushout of $2$ copies of $q$ is given by:
    \[
        \Gamma_A^+(M)^{\coprod_A 2}=A\oplus \Gamma_A(M)^{\oplus 2}\oplus \Gamma_A(M)^{\otimes 2}.
    \]
    The elements of $\Gamma_A^+(M)^{\coprod_A 2}$ are then of $4$ types, we denote by $a=q(a)$ for $a\in A$, we denote by $m_1$ (resp. $m_2$) the generators of the first (resp. second) copy of $\Gamma_A(M)$ for $m\in M$, and $m\otimes n$ the pure tensors of generators in $\Gamma_A(M)^{\otimes 2}$. Then, the two structural injections of the pushout $\iota_1,\iota_2:\Gamma_A^+(M)\to \Gamma_A^+(M)^{\coprod_A 2}$ are induced by $\iota_1(a)=\iota_2(a)=q(a)$ and $\iota_1(m)=m_1$, $\iota_2(m)=m_2$.
    \item The co-sum map $\sigma:\Gamma_A^+(M)\to \Gamma_A^+(M)^{\coprod_A 2}$ given by $\iota_1+\iota_2$,
    \item The co-zero map $\zeta:\Gamma_A^+(M)\to A$ given by $\zeta(a)=a$ and $\zeta(m)=0$.

    To define the lift, we need more insight on $\T^\circ(\Gamma^+_A(M))$. Note that, as a divided power algebra, $\T^\circ(\Gamma^+_A(M))$ has four types of generators, $a,\d a,m$, and $\d m$, for $a\in A$ and $m\in M$, under the relations $\d (a+\lambda b)=\d a+\lambda \d b$, $\d(ab)=a\d b+b\d a$, $\d(a^{[n]})=a^{[n-1]}\d a$, $\d(m+\lambda n)=\d m + \lambda\d n$, and $\d(am)=a\d m+ m\d a$.
    \item The lift $\lambda:\T^\circ (\Gamma_A^+(M))\to \Gamma_A^+(M)$ induced by:
    \[
    \lambda(a)=a\quad \lambda(m)=0\quad \lambda(\d a)=0\quad \lambda(\d m)=m.  
    \]
    To check that $\lambda$ is well defined, observe for example that:
    \[
    \lambda(a\d m+ m\d a)=\lambda(a)\lambda(\d m)+\lambda(m)\lambda (\d a)=am=\lambda(\d(am)).
    \]
\end{enumerate}
Then, we get the following:
\begin{lemm}
    The co-sum and co-zero map $\sigma$, $\zeta$ equip $q:A\to \Gamma_A^+(M)$ with the structure of an abelian cogroup in the coslice category over $A$. The lift $\lambda$ then equips $q$ with the structure of a differential bundle over $A$.
\end{lemm}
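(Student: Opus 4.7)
The plan is to dualize the proof of the analogous statement for commutative rings given in \cite[Theorem 4.17]{cruttwellLemay:AlgebraicGeometry}, replacing the symmetric algebra by the free divided power $A$-algebra $\Gamma_A^+(M)$ and systematically invoking the freeness description of $\Gamma_A^+$ recalled in Section \ref{sec:divpowerA-alg}. I split the work into three stages: (i) verify that $\sigma$, $\zeta$, together with an appropriate negation, endow $q$ with the structure of an abelian cogroup in the coslice category under $A$; (ii) check the differential bundle axioms for $\lambda$ that can be reduced to computations on generators; (iii) verify the universal property axiom, which I expect to be the main obstacle.

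For (i), I would first verify that $\sigma$, $\zeta$, and the negation $\nu:\Gamma_A^+(M)\to\Gamma_A^+(M)$ induced by $\id_A$ and $-\id_M$ are well-defined morphisms of divided power algebras under $A$: each is determined by its restrictions to $A$ and $M$ via the freeness of $\Gamma_A^+$. The abelian cogroup axioms (coassociativity, cocommutativity, counit, coinverse) then reduce, on the generators $a\in A$ and $m\in M$, to obvious identities on $A$-elements and to the abelian-group identities on $M$. For (ii), I would first check that $\lambda$ is well-defined: since $\T^\circ\Gamma_A^+(M)$ is generated by $a$, $\d a$, $m$, $\d m$ subject to the Leibniz and divided-power relations listed in the excerpt, one has to verify a finite collection of routine identities, along the lines of the sample check $\lambda(a\d m+m\d a)=am=\lambda(\d(am))$ given just before the lemma. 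The compatibilities of $\lambda$ with $\sigma$, $\zeta$, $\nu$, and with the tangent natural transformations $z^\circ$, $s^\circ$, $l^\circ$, $c^\circ$, $n^\circ$ can then be verified on these same generators.

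The hard part will be (iii). For a differential bundle, the universal property of the lift — dually, in $\Gamma_{\alg}$, a pushout condition — asserts that a certain square built from $\T^\circ(q)$, the projection out of $\T^\circ\Gamma_A^+(M)$, $\lambda$, and $q$ is a pushout of divided power algebras. To prove this, I would use the explicit description of $\T^\circ A$ as $\Gamma_A^+(\Omega_A^1)$ and the generator/relation presentation of $\T^\circ\Gamma_A^+(M)$ given before the lemma, together with the description of pushouts of divided power $A$-algebras recalled at the end of Section \ref{sec:recdivpowealg}. The combinatorial content is to check that identifying the $\d a$ contributions from $\T^\circ A$ along the two legs of the square produces the divided power $A$-algebra generated by the remaining $m$ and $\d m$, which matches $\Gamma_A^+(M)$ exactly; this is a direct manipulation of free divided-power algebras once the freeness statements are invoked.
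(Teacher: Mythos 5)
Your proposal matches the paper's proof, which simply adapts \cite[Lemma 4.11]{cruttwellLemay:AlgebraicGeometry} \emph{mutatis mutandis}: the cogroup and lift axioms are checked on generators using freeness of $\Gamma_A^+(M)$, the negation $\nu$ is added exactly as you describe, and the one step singled out as non-formal is the pushout (Diagram (12) in \emph{loc.\ cit.}), which you correctly identify as the place where the freeness of $\Gamma_A^+(M)$ as the free divided power $A$-algebra on $M$ replaces the freeness of the symmetric algebra. No substantive difference in approach.
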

\begin{proof}
    The proof is the same, \textit{mutatis mutandis}, as that of \cite[Lemma 4.11]{cruttwellLemay:AlgebraicGeometry}.

    In \cite[Lemma 4.11]{cruttwellLemay:AlgebraicGeometry}, the authors build a structure of differential bundles with negatives, adding a negative map to the structure, which we can define as $\nu:\Gamma_A^+(M)\to \Gamma_A^+(M)$ by $\nu(a)=a$, $\nu(m)=-m$.

    Note that, in the proof of \cite[Lemma 4.11]{cruttwellLemay:AlgebraicGeometry}, the proof that Diagram (12) is a pushout relies on the fact that the symmetric $R$-algebra over $M$ is the free commutative $R$-algebra generated by $M$, which in our case is replaced by the fact that $\Gamma^+_A(M)$ is the free divided power $A$-algebra generated by $M$. The rest of the computation is done on generators, assuming that the morphisms involved are ring morphisms. In our case, the same computations hold, assuming the morphisms involved are divided power algebra morphisms.
\end{proof}
We finally get:
\begin{theo}\label{theo:equivbundlemodulesch}
    There is an equivalence of categories:
    $$\im(\D_\lambda):\DBun^\circ_A\rightleftarrows U(A)_{\Mod}^{op}:\Gamma^+_A.$$
\end{theo}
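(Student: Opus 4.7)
The plan is to mirror the proof of \cite[Theorem 4.17]{cruttwellLemay:AlgebraicGeometry}, replacing the symmetric algebra $\Sym$ by the free divided power $A$-algebra $\Gamma^+_A$ throughout. The two lemmas immediately preceding the theorem already define the functors on objects; what remains is to verify functoriality, to exhibit a unit and a counit for the putative equivalence, and to prove that both are isomorphisms.

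Functoriality is straightforward. A morphism $f:M\to N$ of $U(A)$-modules extends, by the universal property of $\Gamma^+_A$ recalled in Section \ref{sec:divpowerA-alg}, to a morphism of divided power $A$-algebras $\Gamma^+_A(f)$; a check on the generators $a,m,\d a,\d m$ shows that it intertwines the structure maps $q,\sigma,\zeta,\lambda$ and so defines a morphism of differential bundles in $\Gamma_{\alg}^{op}$. Conversely, a morphism of differential bundles is a map $g:E'\to E$ of divided power algebras under $A$ commuting with the lifts, so $g\circ\D_{\lambda'}=\D_\lambda\circ g$ and $g$ restricts to a $U(A)$-linear map $\im(\D_{\lambda'})\to\im(\D_\lambda)$. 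The reversal of arrows is exactly what the $op$ in the statement records.

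For the counit, given a $U(A)$-module $M$, I would compute $\D_\lambda$ on $\Gamma^+_A(M)$ directly from the formulas $\lambda(a)=a$, $\lambda(m)=0$, $\lambda(\d a)=0$, $\lambda(\d m)=m$: combined with the Leibniz and power rules for $\d$, one obtains $\D_\lambda(a)=0$, $\D_\lambda(m)=m$, and the vanishing of $\D_\lambda$ on all shuffle products and on all higher divided powers of elements of $M$. Hence $\im(\D_\lambda)$ is precisely the copy of $M$ sitting inside $\Gamma^+_A(M)$, naturally in $M$.

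The main obstacle is the unit. Given an arbitrary differential bundle $q:A\to E$ with lift $\lambda$, the $U(A)$-linear inclusion $\im(\D_\lambda)\hookrightarrow E$ together with $q$ induces, by the universal property of $\Gamma^+_A$, a canonical morphism of divided power $A$-algebras $\Phi:\Gamma^+_A(\im(\D_\lambda))\to E$, and a generator-wise check shows that $\Phi$ is a morphism of differential bundles. To prove that $\Phi$ is bijective, I would adapt the structural argument of \cite[Theorem 4.17]{cruttwellLemay:AlgebraicGeometry}: in the commutative case, the lift is used to exhibit $E$ as $\Sym_{U(A)}(\im(\D_\lambda))$ via a filtration obtained by iterated application of the retraction built from $\lambda$. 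The replacement here is that $\lambda$ is a morphism of divided power algebras, so the reconstruction must be compatible with the $p$-th divided power; this compatibility follows from the explicit formula for $(-)^{[p]}$ in $\T^\circ E$ recorded above, together with the identity $\D_\lambda\circ q=0$ proved in the first lemma of this subsection. Assembling these steps yields the required natural isomorphism and completes the equivalence.
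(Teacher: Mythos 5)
Your proposal follows essentially the same route as the paper: the paper's proof of Theorem~\ref{theo:equivbundlemodulesch} simply states that the arguments of Cruttwell--Lemay (Lemmas 4.13, 4.14 and Theorem 4.17) carry over with $R$ replaced by $A$ and $\Sym_R$ by $\Gamma^+_A$, which is exactly your plan, and your additional details (the computation showing $\im(\D_\lambda)\cong M$ on $\Gamma^+_A(M)$, and the construction of $\Phi$ via the universal property of $\Gamma^+_A$) are consistent with that adaptation. The only hard step, bijectivity of the unit, is deferred to the cited argument in both your write-up and the paper's, so there is nothing further to reconcile.
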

\begin{proof}
    The proofs of \cite[Lemmas 4.13 and 4.14, Theorem 4.17]{cruttwellLemay:AlgebraicGeometry} can again be adapted to the divided power algebra case, replacing the commutative ring $R$ by the divided power algebra $A$, the symmetric algebra $\Sym_R(M)$ by our $\Gamma^+_A(M)$, and so on. While these proofs are quite involved, we do not think it would be enlightening to paraphrase them here. Note that, in the proof of \cite[Lemma 4.13]{cruttwellLemay:AlgebraicGeometry}, the morphism $\psi^{-1}_{\mathcal E}:E\to \Sym_R(\im(\D_\lambda))$ has been unfortunately typed as $\psi^{-1}_{\mathcal E}:\ker(\mathsf q)[\varepsilon]\to E$: this is a typo.
\end{proof}
\section{Future Work}\label{sec:futurework}
\subsection{Restricted Lie algebras, divided power algebras over an operad} In \cite{dokasFI2025}, we studied the Quillen cohomology of divided power algebras over an operad, using generalised notions of semidirect products and module of Kähler differentials for divided power algebras over a reduced operad $\P$. When $\P=\Com$ is the operad of commutative, associative algebra, we recover the notions from \cite{dokastriplecohomology,dokas23} and presented in this article for divided power algebras in the classical sense. This suggests that the generalised semidirect product and Kähler differentials may provide tangent structures for the category divided power $\P$-algebra and its opposite, for any reduced operad $\P$. However, to follow the same argument as in the present article, one would need to prove that the monad $\Gamma(\P)$ is equipped with a differential combinator which represents derivations in the divided power setting. While there is a reasonable candidate for such a differential combinator, it remains to show that this candidates conditions \textbf{[DC.1]} to \textbf{[DC.6]} from Section \ref{sec:diffcomb}. In particular, the chain rule \textbf{[DC.4]} is not obviously satisfied.

One may want to start by studying $\Gamma(\Lie)$-algebras, where $\Lie$ is the operad governing Lie algebras. Those are also known as reduced Lie algebras, and their Quillen cohomology was studied in detail in \cite{dokas04}.
\subsection{Differential equations in divided power algebras}
In an unpublished paper \cite{bakerdivpow}, Baker studies certain differential equations in the setting of divided power formal power series, as a natural extension of the classical theory of ordinary differential equations for formal power series. Using the formalism of \cite{cockett2021differential}, our tangent structures on divided power algebras and divided affine schemes offer another approach to differential equations on divided power structures. Studying the possible similarities and differences between these two approaches would bring valuable insight on the calculus of divided power algebras.

\subsection{Crystalline cohomology}
Divided power structures appear notably on the crystalline cohomology of schemes \cite{berthelot74}. This cohomology is related to the de Rham cohomology of schemes. Cruttwell and Lucyshyn-Wright have developed a general notion of de Rham cohomology for objects of a tangent category \cite{CruttwellLucychynCohomology}. As noted at the end of the latter article, it is not clear whether the De Rham cohomology coming from the geometric tangent structure of affine schemes corresponds to the classical de Rham cohomology on schemes. So far, in the theory of tangent categories, there is no analogue for crystalline cohomology. One may wonder if the de Rham cohomology for the geometric tangent structure of affine schemes allows to recover their crystalline cohomology. The approach in the present article is different: we start with divided power algebras, for which we create a tangent structure akin to that of affine scheme. Then, the natural question becomes: what is the appropriate analogue of crystalline cohomology for divided affine schemes?

\end{document}